\newcounter{mysubequations}
\renewcommand{\themysubequations}{(\roman{mysubequations})}
\newcommand{\mysubnumber}{\refstepcounter{mysubequations}\themysubequations}
\theoremstyle{plain}
\newtheorem{theorem}{Theorem}[section]
\newtheorem{corollary}[theorem]{Corollary}
\newtheorem{lemma}[theorem]{Lemma}
\newtheorem{proposition}[theorem]{Proposition}
\newtheorem{conjecture}[theorem]{Conjecture}
\theoremstyle{remark}
\newtheorem{remark}[theorem]{Remark}
\numberwithin{equation}{section}
\def\shrug{\texttt{\raisebox{0.75em}{\char`\_}\char`\\\char`\_\kern-0.5ex(\kern-0.25ex\raisebox{0.25ex}{\rotatebox{45}{\raisebox{-.75ex}"\kern-1.5ex\rotatebox{-90})}}\kern-0.5ex)\kern-0.5ex\char`\_/\raisebox{0.75em}{\char`\_}}}
\newcommand{\N}{\mathbb{N}}
\newcommand{\R}{\mathbb{R}}
\newcommand{\calB}{\mathcal{B}}
\newcommand{\calN}{\mathcal{N}}
\newcommand{\ind}[1]{\mathbbm{1}_{\left\{#1\right\}}}
\newcommand{\crochet}[1]{{\left\langle #1 \right\rangle}}
\renewcommand{\tilde}[1]{\widetilde{#1}}
\renewcommand{\hat}[1]{\widehat{#1}}
\renewcommand{\phi}{\varphi}
\renewcommand{\epsilon}{\varepsilon}
\newcommand{\E}{\mathbf{E}}
\renewcommand{\P}{\mathbf{P}}
\newcommand{\calG}{\mathcal{G}}
\newcommand{\dd}{\mathrm{d}}
\title{The extremal process  of a cascading family of branching Brownian motion}
\author{Mohamed Ali Belloum}
\date{}
\begin{document}

\maketitle
\begin{abstract}
	 We study the asymptotic behaviour of the extremal process of a cascading family of branching Brownian motions. This is a particle system on the real line such that each particle has a type in addition to his position.
Particles of type $1$ move on the real line according to Brownian motions and branch at rate $1$ into
two children of type $1$. Furthermore, at rate $\alpha$, they give birth to children too of type $2$. Particles of type $2$ move according to standard Brownian motion  and branch at rate $1$,  but cannot give birth to descendants of type $1$. 
We obtain the  asymptotic behaviour of the extremal process of particles of type $2$.

\vspace{0.5cm}
\noindent\textbf{Keywords}: Extremal process, Branching Brownian motion, multitype branching process.

   \noindent\textbf{MSC 2020}: Primary: 60G55; 60j80. Secondary: 60G70; 92D25  .
\end{abstract}

\section{Introduction}
The branching Brownian motion is a particle system on $\R$ that can be described as follows. Start with on particle at the origin at time $t=0$. After an exponential random time of mean one, this particle splits in two children. The new particles then start  independent copies of the branching Brownian motion from their positions. Denote by $\calN_t$  the set of particles alive at time $t$. For $u\in{\calN_t}$, we denote by $X_u(t)$ the position  at time $t$ of that particle. We define $$M_t=\max_{u \in{\calN_t}} X_u(t)$$
the position of the right most particle in the Branching Brownian motion.

There is a fundamental link between the BBM and the well known Fisher-Kolmogorov-Petrovsky-Piskunov (F-KPP) reaction-diffusion equation 
\begin{align}
\label{equation1}
& \partial_t{u}=\frac{1}{2}\Delta u-u(1-u).    
\end{align}
    If we denote by $u(t,x)=\P(M_t\leq x)$, the function $u$ is the solution of the F-KPP equation \eqref{equation1} with  Heaviside initial condition. It is also known \cite{kolmogorov1937etude} that there exists a function $m_t$ such that $$u(t,m_t+x)\to w(x)$$ uniformly in $x$, where $w$ is called a travelling wave solution of the F-KPP equation, that satisfies 
\begin{align}
\label{travelling}
    &\frac{1}{2}w_{xx}+\sqrt{2}w_x+w(w-1)=0.
\end{align}
Using these observations,  Kolmogorov, Petrovskii and Piskunov \cite{kolmogorov1937etude} proved that $\lim_{t\to \infty}\frac{m_t}{t}=\sqrt{2}$.
Bramson \cite{zbMATH03562205}
showed, using the connection with the BBM, that $$m_t=\sqrt{2}t-\frac{3}{2\sqrt{2}}\log(t)+O(1).$$
 The convergence in distribution of the centred maximal displacement was obtained  by Lalley and Sellke \cite{zbMATH04009482}. They proved that the centered maximum $M_t-m_t$ converges in law to a randomly shifted Gumbel distribution. More precisely, if we denote by $$Z_t= \sum_{u\in{\calN_t}} (\sqrt{2}t-X_u(t)) e^{-\sqrt{2}(\sqrt{2}t-X_u(t))}$$ the so-called derivative martingale of the BBM,  they proved that there exists a constant $C^*>0$ such that for all $x\geq 0$ 
 \begin{align}
 \label{law of the maximum}
 &\lim_{t\to \infty} u(t,m_t+x)=\E\left(\exp(-C^*Z_{\infty}e^{-\sqrt{2}x})\right)
 \end{align}
 where 
 \begin{align}
 \label{Derivative martingale}
     Z_{\infty}:=\lim_{t\to \infty} Z_t \hspace{0.2cm} \text{a.s.}
 \end{align}
 
One of the most interesting questions in the context of branching Brownian motion, is the study of the asymptotic behaviour of the extremal process defined by \[\mathcal{E}_t=\sum_{u \in \mathcal{N}_t}\delta_{X_{u}(t) - m_t}.\] 
  It was shown by Aidékon, Berestycki, Brunet and Shi \cite{zbMATH06223084} as well as Arguin, Bovier and Kistler  \cite{zbMATH06247828} that this point measure converges in law to a decorated Poisson point process with intensity $\sqrt{2}C^*Z_{\infty}e^{-\sqrt{2}x}dx$. A description of the law of decoration was given by  Arguin, Bovier and Kistler \cite{zbMATH06247828}, they proved that there exists a  point measure $\cal{D}$ such that  \begin{equation}
	\label{Decoration}
	\lim_{t \to \infty} \E\left(\exp\left( - \sum_{u \in \mathcal{N}_t} \phi(X_u(t) - M_t) \right) \middle|M_t \geq \sqrt{2} t \right) = \E\left( \exp\left( - \crochet{\mathcal{D},\phi} \right) \right),
	\end{equation}
	for all continuous function $\phi$ with a bounded support on the left. Moreover, the point measure $\mathcal{D}$ is supported on  $\R_-$, with an atom on $0$. The limiting law of the extremal process of the branching Brownian motion, that we denote $\mathcal{E}_{\infty}$, can be described as follows. Let  $(\xi_i)_{i\in{\mathbb{N}}}$ be the atoms of a  Poisson point process with intensity $C^*\sqrt{2}e^{-\sqrt{2}x}dx$. For each atom $\xi_j$, we attached a point measure  $\mathcal{D}_j$  where $(\mathcal{D}_j,j\in{\mathbb{N}})$ are i.i.d copies of the  point measure $\mathcal{D}$ which are independents of $(\xi_i)_{i\in{\mathbb{N}}}$, then we set \[
	\mathcal{E}_\infty = \sum_{j \in \N} \sum_{d \in \mathcal{D}_j} \delta_{\xi_j + d + \frac{1}{\sqrt{2}} \log Z_\infty},
	\]
	where $\sum_{d \in \mathcal{D}_j}$ is the sum on the set of atoms of the point measure $\mathcal{D}_j$.
	
In this paper we study  the asymptotic behaviour of the extremal process of a of a cascading family of branching Brownian motion. This is a particle system on the real line such that each particle has a type in addition to his position. 
Particles of type $1$ move on the real line according to Brownian motion with variance $1$ and branch at rate $1$ into
two children of type $1$. Additionally , at rate $\alpha$, they give birth to children of type $2$. Particles of type $2$ move according to standard Brownian motion  and branch at rate $1$,  but cannot give birth to descendants of type $1$.

 In a recent paper \cite{belloum2021anomalous}, we studied  the  asymptotic behaviour of the extremal process of a two-type reducible branching Brownian motion where particles move and reproduce at  a different rate.  Note that the model we considered here can be seen as a critical case of the one studied in \cite{belloum2021anomalous}.
 
 For all $t\geq 0$ we write $\calN_t$ for the set of particle alive at time $t$, separated into $\calN^1_t$ (respectively $\calN^2_t$) the set of particles of type $1$ (respectively type $2$). If $u\in{\calN^2_t}$, we denote by $T(u)$ the time at which the oldest ancestor of type $2$ of $u$ was born from a particle of type $1$. We also write $X_u(t)$ the position at time $t$ of $u\in{\calN_t}$ and $\hat{M}_t=\max_{u \in{\calN^2_t}} X_u(t)$.
  We studied the asymptotic behaviour of the two-type reducible branching Brownian motion according to a phase diagram containing three regions. However, we didn't consider the boundary case.  	

    

Multitype branching processes are widely used in biology ans ecology. For example,
when modeling certain diseases, such processes can be used to describe the evolution of cells that have carried out different numbers of mutations \cite{durrett2015branching}.  In epidemiology, multi-type
continuous time Markov branching process may be used to describe the dynamics of the spread of parasites of
two types that can mutate into each other in a common  population \cite{borovkov2013high}. 
Many applications of multi-type branching processes in biology can
be found in \cite{haccou2005branching,kimmel2015branching}.

 Historically,  questions about the extreme values of spatial multitype branching processes were not a main subject of interest. Biggins in \cite{zbMATH06111332} gave an explicit formula for the speed of the reducible multitype process  of the branching random walk.
The irreducible case (i.e. when for all pair of types $i$ and $j$, particles of type $i$ have positive probability of having at least one descendant of type $j$ after an exponential time) has been studied by Ren and Yang \cite{zbMATH06293583}. They showed that the asymptotic behaviour of the maximal displacement is similar to that of classical branching Brownian motion. However, the extremal process has not yet been studied.  We expect that in this case, results should be similar to what is observed in the standard BBM.  

Blath, Jacobie and Nie \cite{blath2021interplay} studied the asymptotic speed in a modified version of the standard BBM, called the On/Off BBM. It is a branching Brownian motion on $\R$ such that each particle has an active or dormant state. They studied the asymptotic behaviour of the maximal displacement of the  On/Off BBM using the \textit{McKean representation} \cite{zbMATH03494950} of the  position of the rightmost particle as a solution of F-KPP equation.   We observe here that this model can be seen as a two-type active/dormant BBM.

  We now introduce our main result. 
  	\begin{theorem}
  	\label{main result}
		 Setting
		$m_t=\sqrt{2}t-\frac{1}{2\sqrt{2}}\log(t)$, and  $\mathcal{\hat{E}}_t=\sum_{u \in \mathcal{N}_t^2} \delta_{X_u(t) - m_t}$, 
		we have
		\[
		\lim_{t \to \infty} \mathcal{\hat{E}}_t = \mathcal{\hat{E}}_\infty \quad \text{for the topology of the vague convergence},
		\]
		where $\mathcal{\hat{E}}_\infty$ is a decorated Poisson point process   with intensity $C^*\alpha\sqrt{2}Z_{\infty}e^{-\sqrt{2}x}dx$, the constant $C^*$ is the one introduced in \eqref{law of the maximum}
		 and
		\[
		Z_\infty := \lim_{t \to \infty} \sum_{u \in \mathcal{N}^1_t} (\sqrt{2} t - X_u(t)) e^{\sqrt{2} X_u(t) - 2t} \quad \text{a.s.}
		.\] Moreover, we have  $\displaystyle \lim_{t \to \infty} \P(M_t \leq m_t + x) = \E\left( e^{-\alpha C^{*} Z_\infty e^{-\sqrt{2} x}} \right)$ for all $x \in \R$.
	\end{theorem}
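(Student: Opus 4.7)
I would prove Theorem~\ref{main result} by computing the Laplace functional $\E\bigl[\exp(-\crochet{\hat{\mathcal{E}}_t, \phi})\bigr]$ for every non-negative continuous $\phi$ with compact support, and identifying its $t \to \infty$ limit. The key reduction is to condition on $\calF^1_\infty$, the $\sigma$-algebra generated by the type-$1$ BBM. Given $\calF^1_\infty$, type-$2$ emissions form a Poisson point process of intensity $\alpha\,ds$ along the type-$1$ trajectories, and each emission $(s, X_u(s))$ spawns an independent standard BBM of duration $t-s$ starting from $X_u(s)$. The Poisson Laplace formula then gives
\[
\E\bigl[e^{-\crochet{\hat{\mathcal{E}}_t, \phi}}\,\bigl|\,\calF^1_\infty\bigr] = \exp\Bigl(-\alpha \int_0^t \sum_{u \in \calN^1_s}\bigl(1 - v_\phi(t-s,\,m_t - X_u(s))\bigr)\,ds\Bigr),
\]
where $v_\phi(\tau, z) := \E\bigl[\exp\bigl(-\sum_{v \in \calN_\tau} \phi(X_v(\tau) - z)\bigr)\bigr]$ (with the inner sum taken over a standard BBM of duration $\tau$) solves an F-KPP equation with initial datum $v_\phi(0, z) = e^{-\phi(-z)}$.

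\textbf{Bramson asymptotics and the derivative martingale.} The next step is to exploit Bramson's convergence theorem and the sharp decay of the Lalley--Sellke traveling wave. Writing $m_\tau^{\mathrm{std}} := \sqrt 2 \tau - \tfrac{3}{2\sqrt 2}\log\tau$ for the classical BBM centering, one has $v_\phi(\tau, m_\tau^{\mathrm{std}} + h) \to w_\phi(h)$ as $\tau \to \infty$, and the decoration structure~\eqref{Decoration} yields the sharp asymptotic $1 - w_\phi(h) \sim \gamma(\phi)\,h\,e^{-\sqrt 2 h}$ for $h \to \infty$, with
$\gamma(\phi) = \sqrt 2\, C^* \int_\R \bigl(1 - \E[e^{-\crochet{\mathcal{D}, \phi(\cdot + x)}}]\bigr) e^{-\sqrt 2 x}\,dx$.
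Setting $\xi_u(s) := \sqrt 2 s - X_u(s)$ and $h_u(s) := m_t - X_u(s) - m_{t-s}^{\mathrm{std}} = \xi_u(s) + \tfrac{3\log(t-s) - \log t}{2\sqrt 2}$, the algebraic identity $e^{-\sqrt 2 h_u(s)} = e^{-\sqrt 2 \xi_u(s)}\,\sqrt t/(t-s)^{3/2}$ gives, in the regime where Bramson's asymptotic applies,
\[
\sum_{u \in \calN^1_s}\bigl(1 - v_\phi(t-s, m_t - X_u(s))\bigr) \approx \gamma(\phi)\,\frac{\sqrt t}{(t-s)^{3/2}}\,\Bigl(Z_s + \tfrac{3\log(t-s) - \log t}{2\sqrt 2}\,W_s\Bigr),
\]
where $Z_s$ and $W_s := \sum_{u \in \calN^1_s} e^{-\sqrt 2 \xi_u(s)}$ are respectively the derivative and the critical additive martingales of the type-$1$ BBM. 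Since $Z_s \to Z_\infty$ almost surely and $\sqrt s \, W_s$ remains almost surely bounded (a Seneta--Heyde-type norming), the $W_s$ contribution is negligible in the limit.

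\textbf{Main obstacle.} The principal difficulty is that the formal integral $\int_0^t \sqrt t/(t-s)^{3/2}\,ds$ diverges at $s = t$: Bramson's asymptotic $1-w_\phi(h)\sim\gamma(\phi)h e^{-\sqrt 2 h}$ is only valid for $h \to \infty$, but for $s$ close to $t$ the shift $\tfrac{3\log(t-s) - \log t}{2\sqrt 2}$ is large and negative, and $h_u(s) \to -\infty$ for typical type-$1$ particles. The plan for resolving this is to split the $s$-integral into three regimes and treat each with a different technique: (i)~a short-time window $s \in [0, A]$ controlled by the almost-sure finiteness of $W_A$ and $Z_A$; (ii)~an intermediate regime $s \in [A, t-A]$ where Bramson's asymptotic applies and a dominated-convergence argument combined with $Z_s \to Z_\infty$ identifies the leading term; (iii)~a terminal window $s \in [t-A, t]$ where I would replace the traveling-wave asymptotic by direct Gaussian estimates such as $\P(M^{\mathrm{std}}_\tau > \sqrt 2 \tau + h) \lesssim \tau^{-1/2} e^{-\sqrt 2 h - h^2/(2\tau)}$, whose Gaussian prefactor restores integrability at $\tau = 0$. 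Taking $A \to \infty$ after $t \to \infty$ should identify the limit of the integrated sum as $\alpha C^* Z_\infty \Lambda(\phi)$ with $\Lambda(\phi) := \sqrt 2 \int_\R \bigl(1 - \E[e^{-\crochet{\mathcal{D}, \phi(\cdot + x)}}]\bigr) e^{-\sqrt 2 x}\,dx$, which is precisely the Laplace functional of the claimed DPPP $\hat{\mathcal{E}}_\infty$. The final assertion on $\P(M_t \leq m_t + x)$ then follows by specializing $\phi$ to a continuous approximation of $\lambda \mathbf{1}_{(x, \infty)}$ and passing $\lambda \to \infty$.
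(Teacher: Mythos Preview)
Your starting point---the Poisson Laplace formula conditional on $\calF^1_\infty$---and the target Laplace functional are both correct and match the paper. The gap is in how you pass from $1-v_\phi$ to an integrable expression. You approximate $1-v_\phi(\tau,m_\tau^{\mathrm{std}}+h)$ by the travelling-wave tail $\gamma(\phi)\,h\,e^{-\sqrt2 h}$, which \emph{drops the Gaussian correction} $e^{-h^2/(2\tau)}$. Without that factor, your middle window $[A,t-A]$ does not produce a finite limit: since $Z_s\to Z_\infty>0$ a.s., for fixed $A$
\[
\int_A^{t-A}\frac{\sqrt t}{(t-s)^{3/2}}\,Z_s\,\dd s \ \sim\ Z_\infty\cdot 2\sqrt t\Bigl(\tfrac{1}{\sqrt A}-\tfrac{1}{\sqrt{t-A}}\Bigr)\ \longrightarrow\ \infty.
\]
The singularity is not localised to $[t-A,t]$; it is spread over $s$ of order $t$, so pushing the cutoff to $t-A$ with fixed $A$ and invoking Gaussian bounds only there cannot rescue the computation. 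In other words, regimes (ii) and (iii) as you describe them do not fit together.

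The paper's proof differs from your plan in three essential ways. First, it splits time at $\epsilon t$ and $(1-\epsilon)t$ (not at fixed $A$) and \emph{also} localises the spatial position of the birth point to $X_u(T(u))-\sqrt2\,T(u)\in[-\tfrac1\epsilon\sqrt t,\,-\epsilon\sqrt t]$; both restrictions are shown to be harmless by first-moment estimates (Propositions~\ref{prop1.1} and~\ref{proposition6}). Second, on this window it uses the sharper uniform asymptotic of Proposition~\ref{propo0.5} and Corollary~\ref{corollary extremal process}, namely $F_\phi(r,x)\sim\gamma(\phi)\,(-x)e^{\sqrt2 x}e^{-x^2/(2r)}\sqrt t/r^{3/2}$, which \emph{retains} the Gaussian factor. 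The sum over $u\in\calN^1_s$ is then the Gaussian-weighted quantity $\tilde Z^\epsilon_s$, not the bare derivative martingale $Z_s$. Third, after substituting $s=\lambda t$ the paper identifies $\lim_{t\to\infty}\tilde Z^\epsilon_{\lambda t}=\E\bigl[e^{-\frac{\lambda}{2(1-\lambda)}R_1^2}\mathbf 1_{\{\epsilon/\sqrt\lambda<R_1\le 1/(\epsilon\sqrt\lambda)\}}\bigr]\,Z_\infty$ via a result on weighted derivative martingales (the cited Theorem~1.2), where $R$ is a three-dimensional Bessel process. The key cancellation is that $\E\bigl[e^{-\frac{\lambda}{2(1-\lambda)}R_1^2}\bigr]=(1-\lambda)^{3/2}$ exactly kills the $(1-\lambda)^{-3/2}$ singularity, giving $c(\epsilon)\to1$. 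Your argument needs both the refined asymptotic with the Gaussian factor and this Bessel identification; the bare convergence $Z_s\to Z_\infty$ together with the travelling-wave tail is not enough.
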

 The random variable $Z_{\infty}$ is the same limit as in \eqref{Derivative martingale} since $(X_u(t),u \in \mathcal{N}^1_t)$ is a standard branching Brownian motion.

 An extension of this model is to consider the cascading BBM. It is a particle system that can be described as follows. Particles of type $i$, $i\geq 1$  move according standard Brownian motion and branch at rate $1$ into two children of type $i$. Additionally, at rate $\alpha$, they give birth to one particle of type $i$ and one particle of type $i+1$.

 For all $t\geq 0$, we write $\calN^{(i)}_t$, $i\geq 1$ the set of particles of type $i$. Fix $k\geq 2$, we conjecture the asymptotic behaviour of the extremal process of particles of type $k$.
  		\begin{conjecture}
		 Setting
		$m^{(k)}_t=\sqrt{2}t-\frac{3}{2\sqrt{2}}\log(t)+\frac{k-1}{\sqrt{2}}\log(t)$ and $\mathcal{\hat{E}}^{(k)}_t=\sum_{u \in \mathcal{N}^{(k)}_t} \delta_{X_u(t) - m^{(k)}_t}$, 
		we have
		\[
		\lim_{t \to \infty} \mathcal{\hat{E}}^{(k)}_t = \mathcal{\hat{E}}^{(k)}_\infty \quad \text{for the topology of the vague convergence},
		\]
		where $\mathcal{\hat{E}}_\infty$ is a decorated Poisson point process   with intensity $C^*\frac{\alpha^{k-1}}{(k-1)!}\sqrt{2}Z_{\infty}e^{-\sqrt{2}x}dx$, with same notation as in Theorem \ref{main result}.
		\end{conjecture}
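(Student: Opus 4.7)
The plan is to argue by induction on $k \geq 2$, with Theorem \ref{main result} providing the base case. For the inductive step, assume the conjecture for depth $k-1$. The goal is to identify the limit of the Laplace functional $\phi \mapsto \E\bigl[\exp(-\crochet{\phi, \mathcal{\hat{E}}^{(k)}_t})\bigr]$ for $\phi \geq 0$ continuous with support bounded on the left.

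I would decompose type-$k$ particles according to their seeds: each type-$(k-1)$ particle emits type-$k$ children at rate $\alpha$ along its trajectory, and conditionally on the type-$(k-1)$ genealogy these seeds form a Poisson random measure in space-time. Each seed $(s,y)$ initiates an independent standard BBM running for time $t-s$, whose extremal process (after recentring by $m_{t-s}$) converges as $t-s \to \infty$ to the decorated Poisson point process described in \eqref{Decoration}. Writing $\theta(s,y) := y + m_{t-s} - m^{(k)}_t$, the conditional Laplace functional factors as
\[
\E\Bigl[\exp\bigl(-\crochet{\phi, \mathcal{\hat{E}}^{(k)}_t}\bigr) \Bigm| \calF^{(k-1)}\Bigr] = \prod_{\text{seeds }(s,y)} F_{t-s}\bigl(\phi; \theta(s,y)\bigr),
\]
with $F_T(\phi;\theta) \to G_\phi(\theta) \in [0,1]$ as $T \to \infty$, and $1 - G_\phi(\theta) \sim C^* \Psi(\phi)\, e^{-\sqrt{2}\,\theta}$ as $\theta \to +\infty$, where $\Psi(\phi)$ is the DPPP integrand attached to $\phi$ and the decoration law $\mathcal{D}$.

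Taking $-\log$, the problem reduces to showing that, conditionally on the type-$(k-1)$ genealogy, the (Poissonian) total weight
\[
\alpha \int_0^t \dd s \sum_{v \in \calN_s^{(k-1)}} \bigl(1 - G_\phi(\theta(s, X_v(s)))\bigr)
\]
converges in probability to $C^* \Psi(\phi) \cdot \frac{\alpha^{k-1}}{(k-1)!}\, Z_\infty$. This can be attacked via a first-moment calculation that propagates the many-to-one lemma through all depths $1, \dots, k-1$: writing out $\theta$ explicitly and using standard ballot-type corrections at each depth, the expected value reduces to an explicit Gaussian integral whose leading asymptotics produce the combinatorial prefactor $\alpha^{k-1}/(k-1)!$ (from ordering the $k-1$ type-switching events in $[0,t]$), the DPPP integrand $C^*\Psi(\phi)$, and the random weight $Z_\infty$ (from convergence of the depth-$1$ derivative martingale). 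The extra shift $\frac{1}{\sqrt{2}}\log t$ in $m^{(k)}_t$ compared with $m^{(k-1)}_t$ is precisely the normalisation that keeps this moment finite, and the factor $\alpha/(k-1)$ at each depth arises from adding one more temporal integration.

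The main obstacle will be to upgrade the inductive extremal convergence of $\mathcal{\hat{E}}^{(k-1)}_s$, which is a statement at a single terminal time, to a joint description of the density of type-$(k-1)$ particles over the whole interval $[1,t]$. Since seeds contributing to $\mathcal{\hat{E}}^{(k)}_t$ arise from a continuum of times with $t-s \to \infty$, one needs that the frontier of type-$(k-1)$ particles is asymptotically proportional to the same random variable $Z_\infty$ uniformly in $s$. I would establish this by combining a Lalley--Sellke-type tightness estimate with second-moment bounds on the seed measure, showing that fluctuations at different times decouple and that the total weight concentrates around its mean. Once this input is in place, the proof strategy of Theorem \ref{main result} should transfer, with the only genuinely new ingredient being the combinatorial accounting that yields the $\alpha^{k-1}/(k-1)!$ factor.
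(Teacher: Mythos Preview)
The paper does not prove this statement: it is explicitly presented as a \emph{conjecture} and left open, so there is no proof in the paper to compare your proposal against.

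That said, your outline has a structural weakness you yourself flag, and your proposed fix does not close it. The inductive hypothesis gives you the extremal process of type-$(k-1)$ particles at a \emph{single} terminal time, whereas your decomposition requires control over type-$(k-1)$ particles at \emph{all} times $s\in[0,t]$ simultaneously, and not near the frontier but at depth of order $\sqrt{t}$ below $\sqrt{2}s$ (this is where the contributing seeds sit, as the paper's proof of Theorem~\ref{main result} shows for $k=2$). The extremal-process limit says nothing about this bulk regime, so the induction as formulated does not feed into itself; ``Lalley--Sellke-type tightness'' and second moments on the seed measure are not the right tools here, because the object you need to control is an additive functional of the whole type-$(k-1)$ tree, not a maximum.

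A route closer to the paper's treatment of $k=2$ would bypass induction on the extremal process entirely: iterate the many-to-one formula (Proposition~\ref{cor:poissonSum}) directly through the $k-1$ type changes, obtaining a $(k-1)$-fold time integral over the simplex $0<s_1<\cdots<s_{k-1}<t$ driven by a single Brownian path; the simplex volume produces $1/(k-1)!$ and the $k-1$ rates give $\alpha^{k-1}$. The localisation arguments (analogues of Propositions~\ref{prop1.1} and~\ref{proposition6}) and the random-variable convergence replacing $\tilde Z^\epsilon_{\lambda t}\to \E[h_{\lambda,\epsilon}(R_1)]\,Z_\infty$ from \cite{zbMATH06524139} would then need to be extended to a $(k-1)$-parameter family. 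That extension is the genuine content missing, and is presumably why the author left the statement as a conjecture.
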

			 	 \begin{remark}
	     We observe a change in the logarithmic correction of the median $ m_t $  comparing it to that of a classical branching Brownian motion. More precisely, we pass from a multiplicative factor $\frac{-3}{2\sqrt{2}} $ in the case of a standard BBM to a factor $ \frac{-3}{2\sqrt {2}} + \frac{k-1}{\sqrt {2}}$ for particles of type $k$. 
	 \end{remark}
\paragraph{Notation.}{Throughout the paper, we use $C$ and $c$ to denote a generic positive constant, that may change from line to line. We say that $f_n\sim g_n$ if
 $\lim_{n\to \infty}\frac{f_n}{g_n}=1$. For $x\in{\R}$, we write $x_+=\max{(x,0)}$.}
\paragraph{Organisation of the paper.}{The rest of the paper is organised as follows. In the next section, we recall a version of a multitype many-to-one lemma that was introduced in \cite{belloum2021anomalous}. In Section $3$, we introduce some useful lemmas in the context of standard BBM. Finally, we conclude the paper with a proof of the main result.} 
 \section{Multitype many-to-one formula and Brownian motion estimates}
 The classical many to-one lemma was first introduced by Kahane and Peyrière \cite{zbMATH03544943}. This lemma links an additive functional of branching Brownian motion with a simple function of Brownian motion. Let us recall the standard version of many-to-one in the context of classical BBM.
 \begin{lemma}[Many to-on-lemma]
  For any $t\geq 0$, and measurable positive function $f$, we have
  \begin{align}
  \label{many to one lemma}
      &\E\left(\sum_{u\in{\calN_t}} f(X_u(s),s\leq t)\right)=e^t\E(f(B_s,s\leq t)),
  \end{align}
  where $B$ is a standard Brownian motion.
 \end{lemma}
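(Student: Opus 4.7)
The plan is to decouple the BBM into its genealogical and spatial components, and then compute the expectation by conditioning on the genealogy. I would first construct the BBM in two stages: the genealogy is a continuous-time rate-$1$ Yule tree $\mathcal{T}$, in which each leaf branches at rate $1$; given $\mathcal{T}$, independent Brownian increments are assigned to the edges so that, for each $u \in \calN_t$, the trajectory $(X_u(s))_{s\le t}$ is the concatenation of Brownian segments along its ancestral line starting from the origin. Crucially, $\mathcal{T}$ is independent of all the Brownian segments used in this construction.

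The key step is the following claim: for each fixed $u \in \calN_t$, conditionally on $\mathcal{T}$, the path $(X_u(s))_{s\le t}$ has the law of a standard Brownian motion on $[0,t]$. This follows from the strong Markov property of Brownian motion: at every branching time along the ancestral line of $u$, the new Brownian segment starts at the value reached by the previous one, so the concatenation of these segments is itself a Brownian motion on $[0,t]$. Consequently, conditioning on $\mathcal{T}$,
\[
\E\!\left(\sum_{u\in\calN_t} f(X_u(s),s\le t)\,\bigg|\, \mathcal{T}\right) = |\calN_t|\,\E(f(B_s, s\le t)).
\]

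Taking expectations, it then suffices to observe that $(|\calN_t|)_{t\ge 0}$ is a rate-$1$ Yule process started from a single particle, and a standard ODE argument (using that its intensity equals its current size) gives $\E(|\calN_t|) = e^t$. Plugging this in yields the stated identity.

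There is no real obstacle here, since this is a classical statement. The only mildly delicate point is the justification of the claim above, which one can either do directly as sketched, or bypass altogether via an induction on the number of branching events: conditioning on the first branching time $\tau \sim \mathrm{Exp}(1)$ and using the branching property at $\tau$, one obtains a renewal equation for $F(t) := \E[\sum_{u\in\calN_t} f(X_u(s), s \le t)]$ which is also satisfied by $G(t) := e^t \E(f(B_s, s\le t))$, and uniqueness concludes.
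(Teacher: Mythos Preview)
The paper does not give its own proof of this lemma; it is simply recalled as a classical result (attributed to Kahane and Peyri\`ere) and stated without argument. Your proposal is correct and is precisely the standard proof: decouple the Yule genealogy from the spatial motion, observe that conditionally on the tree each particle's trajectory is a Brownian motion on $[0,t]$, and use $\E|\calN_t|=e^t$. There is nothing to compare against, and no gap in your argument.
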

   Before we introduce a multitype version, we will set some notation. We write $$\calB=\{u\in{\cup_{t\geq 0}\calN^2_t}, T(u)=b_u\}$$ for the set of particles of type $2$ that are born from a particle of type $1$. Recall that $T(u)$ is the time at which the oldest ancestor of type $2$ of $u$ was born. The following proposition was introduced in \cite[Corollary $4.3$]{belloum2021anomalous}.
   \begin{proposition}
		\label{cor:poissonSum}
		For any measurable non-negative function $f$, we have
		\begin{align}
		\nonumber\mathbb{E}\left(\sum_{u \in \mathcal{B}} f(X_u(s), s \leq T(u)) \right) &= \alpha \int_0^\infty e^{\beta t} \mathbb{E}(f(B_s, s \leq t)) dt,\\
		\mathbb{E}\left( \exp\left( - \sum_{u \in \mathcal{B}} f(X_u(s), s \leq T(u)) \right) \right) & \label{eq1}= \mathbb{E}\left( \exp\left( - \alpha \int_0^\infty \sum_{u \in \mathcal{N}_t^1} 1 - e^{-f(X_u(s), s \leq t)} dt \right) \right).
		\end{align}
	\end{proposition}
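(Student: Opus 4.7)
The plan is to condition on the entire type-$1$ branching Brownian motion and exploit the fact that, by the very definition of the model, births of type-$2$ children happen along each type-$1$ trajectory as an independent Poisson process of rate $\alpha$. Concretely, let $\mathcal{F}^1$ denote the $\sigma$-algebra generated by the type-$1$ tree $\{(X_v(s), s\leq t) : v\in \calN_t^1,\, t\geq 0\}$. Then conditionally on $\mathcal{F}^1$, the collection $\{(v_u, T(u)) : u\in\calB\}$, where $v_u$ is the type-$1$ ancestor of $u$ at the instant $T(u)$, is a Poisson point process whose intensity is $\alpha$ times the length measure on the type-$1$ tree. Moreover, for $u\in\calB$ the trajectory $(X_u(s), s\leq T(u))$ coincides with $(X_{v_u}(s), s\leq T(u))$, so the functional $\sum_{u\in\calB} f(X_u(s), s\leq T(u))$ is a functional of that Cox process.

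For the Laplace transform, I would then invoke Campbell's exponential formula: for a Poisson point process $\Pi$ with intensity $\mu$ one has $\E[\exp(-\sum_{x\in\Pi} g(x))] = \exp(-\int(1-e^{-g(x)})\mu(dx))$. Applied in our setting, this yields
\[
\E\!\left[\exp\!\left(-\sum_{u\in\calB} f(X_u(s),s\leq T(u))\right)\,\Big|\,\mathcal{F}^1\right] = \exp\!\left(-\alpha \int_0^\infty \sum_{v\in\calN_t^1} \bigl(1-e^{-f(X_v(s), s\leq t)}\bigr)\,dt\right).
\]
Taking the outer expectation delivers \eqref{eq1} directly. For the first identity, I would use Campbell's mean formula $\E[\sum_{x\in\Pi} g(x)] = \int g\,d\mu$ in the same way to get
\[
\E\!\left[\sum_{u\in\calB} f(X_u(s),s\leq T(u))\,\Big|\,\mathcal{F}^1\right] = \alpha \int_0^\infty \sum_{v\in\calN_t^1} f(X_v(s), s\leq t)\,dt,
\]
after which Fubini and the classical many-to-one lemma \eqref{many to one lemma} applied to the inner sum produce $\alpha\int_0^\infty e^{t}\,\E[f(B_s,s\leq t)]\,dt$ (the statement's $\beta$ being the type-$1$ branching rate, here $\beta=1$).

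The only genuine step that needs care is the identification of $\calB$ as a Cox process driven by the type-$1$ tree. This is essentially built into the construction of the cascading system, but to make it rigorous I would either (a) build $\calB$ by superposing, on each edge of the type-$1$ tree, an independent Poisson process of rate $\alpha$ on its time interval, which is the natural graphical construction of the model, or (b) prove it by an induction on the branching structure of type-$1$ particles combined with the strong Markov property at each splitting. Integrability issues for Fubini are mild because $f\geq 0$ in the mean formula, $0\leq 1-e^{-f}\leq 1$ in the Laplace formula, and $\E|\calN_t^1|=e^{t}$ is locally integrable against $\alpha\,dt$ for any fixed compact-support truncation of $f$. No further obstacle arises; the result is in fact just the quoted Corollary~$4.3$ of \cite{belloum2021anomalous}, which is proved by exactly this conditioning argument.
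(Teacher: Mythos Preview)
Your proposal is correct and follows the standard conditioning-on-the-type-$1$-tree argument. The paper itself does not prove this proposition: it merely quotes it as \cite[Corollary~4.3]{belloum2021anomalous}, where the proof is indeed the Cox-process/Campbell's formula argument you outline.
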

	\subsection{Brownian motion estimates}
	\label{Section 3.1}
 We now introduce some Brownian motion estimates that will be needed in the proof of the main result. 
  Let $(B_s)_{s\geq 0}$ be a standard Brownian motion. 
  The quantity $\sup_{0\leq s\leq t} B_s$ has the same law as $|B_t|$. As a consequence, there exists $C>0$ such that for all $t\geq 1$, $y\geq 1$ we have
  \begin{align}
  \label{equation 2.2}
      &\P(B_s\geq -y, s\leq t)=\P(|B_1|\leq y/\sqrt{t})\leq C \frac{y\wedge\sqrt{t}}{\sqrt{t}}.
  \end{align}
   We need also an estimate for the Brownian motion  to stay below a line and end up in a finite interval. For all $K\geq 1$ and $y\geq 1$ we have
   \begin{align}
   \label{eq 3.4}
     & \P\left(B_s \leq K, s\leq t, B_{t} >K-y\right)\leq C \frac{(1+K)(1+y)}{(t+1)^{3/2}}
   \end{align}
   This estimate  can be obtained using similar computations to these used in \cite[Lemma 3.8]{zbMATH06471546} for random walks.
   
We next introduce the $3$-dimensional Bessel process that we denote $(R_s)_{s\geq 0}$.  We have the following link between the process $(R_s)_{s\geq 0}$ and the Brownian motion: For all $t\geq 0$, $x>0$ and any measurable positive function $g$, we have \begin{align}\label{definition Bessel}\E_x\left(g(B_s, s\in{[0,t]})\ind{B_s>0, s\leq t}\right)=\E_x\left(\frac{x}{R_s}g(R_s, s\in{[0,t]})\right).\end{align}
   In other words, $R$ corresponds to the law of the Brownian motion conditioned on not hitting $0$ in the sense of Doobs's $h$-transform.  Let $p_s(x,z)$ be the transition density of $R_s$ started from $x$ at time $s$.  We have  $$p_s(x,z)= \sqrt{\frac{2}{\pi}}e^{-(z-x)^2/2s}\ind{z>0}\times \left\{\begin{array}{ll}
        \frac{z}{2x\sqrt{s}}(1-e^{-2xz/s}) & \mbox{if }  x>0  \\
         \frac{z^2}{s^{3/2}} & \mbox{if } x=0
    \end{array}.\right.
    $$

   \subsection{Branching Brownian motion estimates}
   In this section, we denote by $(X_t(u), u\in{\mathcal{N}_t})$ a standard branching Brownian motion. We recall here some useful estimates on this process, that will be used to prove Theorem \ref{main result}.
   
   We know that with high probability all particles in the one-type BBM are smaller than $\sqrt{2}t+y$, for all $y\geq 0$. More precisely, we have the following upper bound.
   \begin{proposition} 
   \label{proposition 2.3} There exists a constant $C>0$ such that for any $t\geq 1$ and $K\geq 1$
	      \[\P\left(\exists s \geq 0, u \in \mathcal{N}^{1}_s : X_u(s) \geq \sqrt{2} s+K \right)\leq C(K+1) e^{-\sqrt{2}K}.\]
	      \end{proposition}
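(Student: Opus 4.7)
The plan is to apply Doob's $L^1$ maximal inequality to the critical additive martingale
$$W_s := \sum_{u \in \mathcal{N}^1_s} e^{\sqrt{2} X_u(s) - 2s},$$
which is a non-negative càdlàg martingale with $\E[W_0]=1$; both facts follow from \eqref{many to one lemma} applied to $f(B_\cdot) = e^{\sqrt{2} B_s - 2s}$, together with the standard fact that $s\mapsto e^{\sqrt{2} B_s - s}$ is a martingale.

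The key observation is that on the event
$$A := \{\exists s\geq 0,\, u \in \mathcal{N}^1_s : X_u(s) \geq \sqrt{2} s + K\},$$
at the first moment $s$ when some particle $u$ crosses the curve $\sqrt{2} s + K$, the single summand indexed by $u$ already contributes $e^{\sqrt{2}(\sqrt{2} s + K) - 2s} = e^{\sqrt{2} K}$ to $W_s$. Hence $A \subseteq \{\sup_{s\geq 0} W_s \geq e^{\sqrt{2} K}\}$. Applying Doob's maximal inequality to $W_{\cdot\wedge T}$ and then letting $T\to\infty$ via monotone convergence yields
$$\P(A) \leq \P\Bigl(\sup_{s\geq 0} W_s \geq e^{\sqrt{2} K}\Bigr) \leq e^{-\sqrt{2} K},$$
which is in fact stronger than the stated bound, the $(K+1)$ factor in the statement being slack.

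The only mild technical point, which I expect to be the main obstacle, is the càdlàg regularity of $W$ required to apply Doob's inequality on the full half-line: between branching events $W_s$ evolves as a smooth exponential functional of the driving Brownian motions, and branching events contribute a.s.\ only finitely many (positive) jumps on any compact interval, so the supremum can be reduced to a countable one. An equivalent alternative, avoiding this regularity discussion entirely, is to apply Markov's inequality to the size of the stopping line $\mathcal{L} = \{u : \tau_u < \infty\}$ with $\tau_u := \inf\{s : X_u(s) \geq \sqrt{2} s + K\}$, use a many-to-one formula at this stopping line to get $\E|\mathcal{L}| = \E[e^{\tau} \ind{\tau < \infty}]$ where $\tau$ is the corresponding Brownian first-passage time, and then evaluate this expectation to $e^{-\sqrt{2} K}$ via optional stopping of $s\mapsto e^{\sqrt{2} B_s - s}$ combined with a short Girsanov argument to annihilate the contribution of $\{\tau = \infty\}$. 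Either route gives the bound without any polynomial factor, so the statement follows immediately.
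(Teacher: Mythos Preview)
Your argument is correct and yields the sharper bound $\P(A)\leq e^{-\sqrt{2}K}$, so the proposition follows a fortiori. The additive martingale $W_s=\sum_{u\in\mathcal{N}^1_s}e^{\sqrt{2}X_u(s)-2s}$ is indeed a non-negative martingale with $\E[W_0]=1$; paths are continuous between branching times and jump only upward at births, so the first hitting time of the line $s\mapsto\sqrt{2}s+K$ is well-defined on $A$, and at that instant the contribution of the hitting particle alone forces $W_\tau\geq e^{\sqrt{2}K}$. Doob's maximal inequality for non-negative supermartingales then gives the bound directly.

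This is a genuinely different and more efficient route than the paper's. The paper discretises time, setting $Z_l$ to be the number of particles that first cross the barrier in the window $[l-1,l]$, and bounds $\P(A)\leq\sum_{l\geq 1}\E(Z_l)$; each $\E(Z_l)$ is then estimated via the many-to-one lemma, the Markov property at time $l-1$, a Girsanov tilt, and the ballot-type estimate \eqref{eq 3.4}. The factor $(K+1)$ in the statement is an artefact of this last step: the estimate \eqref{eq 3.4} produces a factor $(1+K)$ from the probability that a Brownian motion stays below level $K$. Your martingale argument bypasses all of this machinery and shows the polynomial prefactor is unnecessary. The paper's approach, while heavier, has the minor advantage of being assembled entirely from the Brownian estimates already catalogued in Section~\ref{Section 3.1}, and the same discretisation template reappears elsewhere in the BBM literature when one needs finer control on \emph{where} the barrier is crossed; but for the bare statement as written, your proof is cleaner and sharper.
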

\begin{proof}
 Let $l\geq 1$ be an integer. Define $\tau=\inf\{s\leq t,\exists u\in{\mathcal{N}^{1}_t}:X_u(t) \geq \sqrt{2} s+K\}$ and $Z_l$ to be 
the number of particle in $\mathcal{N}^{1}_l$ that stay below the barrier $s\mapsto \sqrt{2}s+K$ for all $s\leq l-1$ and such that $X_u(t) >\sqrt{2} s+K$   for some $t\in[l-1,l]$. Then, by the Markov inequality, we have 
\begin{align*}
    &\P\left(\exists s \geq 0, u \in \mathcal{N}^{1}_s : X_u(s) \geq \sqrt{2} s+K \right)\leq \sum_{l=1}^{\infty}\P(\tau \in[l-1,l])=\sum_{l=1}^{\infty}\E(Z_l).
\end{align*}
Using Lemma  \ref{many to one lemma}, we obtain
\begin{align}
    \label{eq12}&\E(Z_l)\leq \P\left(B_s \leq \sqrt{2} s+K, s\leq l-1, B_r >\sqrt{2} r+K  \text{ for some } r\in[l-1,l]\right).
\end{align}
Applying the Markov property at time $l-1$, we get
\begin{align*}
    &\P\left(B_s \leq \sqrt{2} s+K, s\leq l-1, B_r >\sqrt{2} r+K  \text{ for some } r\in[l-1,l]\right)\leq\E(g(\sup_{0\leq s\leq 1} B_s))
\end{align*}
where $g(x)=\P\left(B_s \leq \sqrt{2} s+K, s\leq l-1, B_{l-1} >\sqrt{2}(l-1)+K-x\right).$ Moreover, using Girsanov theorem we have
\begin{align*}
    &\P\left(B_s \leq \sqrt{2} s+K, s\leq l-1, B_{l-1} >\sqrt{2}(l-1)+K-x\right)\\&\leq \E\left(e^{-(\sqrt{2}B_{l-1}+l-1)}\ind{B_s \leq K, s\leq l-1, B_{l-1} >K-x}\right)\\&\leq e^{-l+1}e^{\sqrt{2}(x-K)}\P\left(B_s \leq K, s\leq l-1, B_{l-1} >K-x\right)\\&\leq C e^{-l}e^{\sqrt{2}(x-K)}\frac{(1+K)(1+x)}{(l+1)^{3/2}}
\end{align*}
where in the last inequality we used \eqref{eq 3.4}. Plugging all this in \eqref{eq12} and using that $\sup_{0\leq s\leq 1} B_s$ has the same law as $|B_1|$ (see Section \ref{Section 3.1}), then easy computations lead to 
\begin{align*}
    &\P\left(\exists s \geq 0, u \in \mathcal{N}_s : X_u(s) \geq \sqrt{2} s+K \right)\\&\leq  C(K+1)e^{-\sqrt{2}K}\sum_{l=1}^{\infty} \frac{1}{(l+1)^{3/2}}\leq C(K+1)e^{-\sqrt{2}K}.
\end{align*}
 which completes the proof.
\end{proof}

	We also have an upper bound on the tail of the maximal displacement that was introduced by Bramson \cite{zbMATH03562205} and refined  by Arguin, Bovier and Kiestler  \cite{zbMATH06083948}. We write $\tilde{m}_t=\sqrt{2}t-\frac{3}{2\sqrt{2}}\log(t)$.
	
\begin{proposition}{\cite[Corollary 10]{zbMATH06083948}}
	\label{propsoition 2.4}
 There exists $t_0>0$ such that $\forall t\geq t_0$  and  $y\in{\mathbb{R}_+}$
	
	\begin{align*}
	&\mathbb{P}(M_t>\tilde{m}_t+y)\leq  C(1+y_+) e^{-\sqrt{2}y-\frac{y^2}{2t}}
	\hspace{0.2cm}
	\end{align*}
	for some constant $C>0$.
\end{proposition}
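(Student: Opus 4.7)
The plan is a truncated first moment argument in the spirit of Bramson \cite{zbMATH03562205}. The naive first moment bound obtained from the many-to-one formula \eqref{many to one lemma},
\[
\P(M_t > \tilde m_t + y) \leq \E\#\{u \in \calN_t : X_u(t) > \tilde m_t + y\} = e^t \, \P(B_t > \tilde m_t + y),
\]
yields, after a Gaussian computation, an estimate of order $t \cdot e^{-\sqrt{2}y - y^2/(2t)}$, which is off by a factor $t$. To recover the sharp polynomial factor one must restrict to trajectories that stay below a suitably chosen concave barrier $\phi_t$, running from $0$ at time $0$ up to roughly $\tilde m_t + y$ at time $t$, with an entropic repulsion correction of order $\log s$ in the bulk.

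The contribution of trajectories that cross $\phi_t$ at some time $s\leq t$ is controlled by Proposition~\ref{proposition 2.3} combined with an auxiliary Ballot-type estimate, and can be made negligible compared to $(1+y_+)e^{-\sqrt{2}y - y^2/(2t)}$. For the trajectories that stay below $\phi_t$, apply \eqref{many to one lemma} to reduce to the Brownian quantity
\[
e^t \, \P\bigl(B_s \leq \phi_t(s)\ \forall s\leq t,\ B_t > \tilde m_t + y\bigr),
\]
and perform a Girsanov change of measure with density $e^{\sqrt{2}B_t - t}$. Under the tilted law, $W_s := B_s - \sqrt{2}s$ is a standard Brownian motion and the expression becomes
\[
\E\Bigl[e^{-\sqrt{2}W_t}\,\mathbbm{1}_{\{W_s \leq \phi_t(s) - \sqrt{2}s\ \forall s\leq t,\ W_t > y - \tfrac{3}{2\sqrt{2}}\log t\}}\Bigr].
\]

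The last step is a purely Brownian computation. Using the link with the $3$-dimensional Bessel process \eqref{definition Bessel} (or, equivalently, the reflection principle together with the Bessel bridge transition density) one shows that the density of $W_t$ restricted to the event of staying under the shifted barrier $\phi_t(s) - \sqrt{2}s$ is of order $(1+y_+)/t^{3/2}$ times a Gaussian factor at the endpoint. Multiplying by the exponential tilt $e^{-\sqrt{2}z}$ and integrating over $z \geq y - \tfrac{3}{2\sqrt{2}}\log t$ produces exactly the target $C(1+y_+)e^{-\sqrt{2}y - y^2/(2t)}$. The main technical difficulty is the judicious choice of the barrier $\phi_t$ and the uniform control of the Brownian estimate across the three regimes $y = O(1)$, $1 \ll y \ll \sqrt{t}$ and $y \gtrsim \sqrt{t}$; the Gaussian correction $e^{-y^2/(2t)}$ is harmless in the first two regimes but decisive in the third, where a cruder linear barrier already suffices.
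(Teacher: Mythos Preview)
The paper does not prove this proposition at all: it is stated with an explicit citation to \cite[Corollary~10]{zbMATH06083948} and no argument is given in the present paper. So there is no ``paper's own proof'' to compare your attempt against.

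That said, your sketch is the correct outline of how such a bound is obtained in the literature (and in particular in the cited reference): the key ideas --- replacing the naive first moment by a barrier-constrained first moment, tilting by $e^{\sqrt{2}B_t - t}$ via Girsanov, and then using the $t^{-3/2}$ ballot/Bessel estimate for Brownian motion staying below a curve --- are exactly what is needed. What you have written is a plan rather than a proof, however: the choice of the barrier $\phi_t$ (typically a linear interpolation between $0$ and $\tilde m_t + y$ plus a logarithmic bump in the bulk), the bound on the probability of crossing $\phi_t$, and the uniformity in $y$ over all three regimes all require nontrivial work that you only allude to. If you were to complete this into a self-contained proof you would essentially be reproducing the argument of Arguin--Bovier--Kistler (or Bramson), which is precisely why the present paper chooses to cite the result rather than reprove it.
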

    We next recall recall a link between the FKPP equation and the branching Brownian motion.
\begin{lemma}
	Let $f:\R\mapsto[0,1]$ a measurable function and \begin{align}\label{eq0.1}
	&u_{f}(t,x)=1-\E[\prod_{u\in{\calN_t}}\left(1-f(x-X_u(t))\right)].
	\end{align}
	Then $u_f$ solves the FKPP equation with the initial condition $u_{f}(0,x)=f(x)$.
\end{lemma}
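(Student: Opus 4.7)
The natural approach is to pass to the function $v(t,x) := 1 - u_f(t,x) = \E\bigl[\prod_{u \in \calN_t}(1 - f(x - X_u(t)))\bigr]$, establish that $v$ satisfies \eqref{equation1}, and then translate the result back to $u_f$. The initial condition is immediate: at $t = 0$ the set $\calN_0$ reduces to a single particle at the origin, so $v(0,x) = 1 - f(x)$, i.e.\ $u_f(0,x) = f(x)$.

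To obtain the PDE, I would decompose according to the behaviour of the root particle during an infinitesimal initial interval $[0,h]$. The root performs a Brownian motion $(B_s)_{s \le h}$ and branches at an independent time $\tau \sim \mathrm{Exp}(1)$. On $\{\tau > h\}$, of probability $e^{-h}$, the Markov property gives that the time-$(t+h)$ system is a copy of the BBM run for time $t$ started from the single particle located at $B_h$, contributing $e^{-h}\,\E[v(t, x - B_h)]$. On $\{\tau \le h\}$, of probability $1 - e^{-h}$, the root has split into two subtrees that, by the branching property, evolve as independent BBMs; this contributes $(1-e^{-h})\,\E[v(t, x-B_h)^2] + o(h)$. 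Combining the two,
\begin{align*}
v(t+h,x) = e^{-h}\,\E\bigl[v(t, x - B_h)\bigr] + (1-e^{-h})\,\E\bigl[v(t, x - B_h)^2\bigr] + o(h).
\end{align*}
Subtracting $v(t,x)$, using the Taylor expansion $\E[v(t, x - B_h)] = v(t,x) + \tfrac{h}{2}\partial_{xx}v(t,x) + o(h)$, dividing by $h$ and letting $h \to 0$ yields
\begin{align*}
\partial_t v(t,x) = \tfrac{1}{2}\partial_{xx} v(t,x) - v(t,x)\bigl(1 - v(t,x)\bigr),
\end{align*}
which is exactly \eqref{equation1}. Substituting back $v = 1 - u_f$ gives the F-KPP equation for $u_f$ with the initial datum $f$.

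The main obstacle is not the calculation above but the regularity in $(t,x)$ needed to differentiate under the expectation and perform the Taylor expansion. I would handle this in the classical way by first deriving a mild (integral) form of the equation, by conditioning on the first branching time of the root particle:
\begin{align*}
v(t,x) = e^{-t}\,\E\bigl[1 - f(x - B_t)\bigr] + \int_0^t e^{-s}\,\E\bigl[v(t-s, x - B_s)^2\bigr]\,ds.
\end{align*}
Convolution with the Gaussian transition kernel of $B$ smooths the data in $x$, and a short bootstrap, together with the uniform bound $0 \le v \le 1$ (which makes dominated convergence applicable throughout), produces the regularity needed to read off the PDE classically and to justify all interchanges of limits and expectations performed in the infinitesimal argument.
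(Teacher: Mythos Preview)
The paper does not actually prove this lemma: it is stated as a recalled fact (``We next recall a link between the FKPP equation and the branching Brownian motion'') with no accompanying proof, the result being the classical McKean representation \cite{zbMATH03494950}. Your argument is the standard one and is correct in outline: pass to $v=1-u_f$, condition on the first branching event to obtain the mild/integral equation, use Gaussian smoothing to bootstrap regularity, and then read off the differential form by the infinitesimal expansion. There is nothing to compare against in the paper itself; your write-up is exactly the proof one would supply if asked to fill in the reference.
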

 In our work we need an uniform estimate of general solutions of the F-KPP equation that is useful for the computation of the  asymptotics of  the Laplace transform of the extremal process of the BBM. Before that, let us recall a  result of Bramson \cite{zbMATH03562205} on the convergence of the solutions of F-KPP equation to travelling wave (see also Theorem $4.2$ in \cite{zbMATH06247828}.)
\begin{theorem}\cite[Theorems $A,B$]{zbMATH03562205}
\label{Theorem Bramson}
	Let  $u_{f}$ be a solution of the F-KPP equation in the form of \eqref{eq0.1}  with the initial condition $u(0,x)=f(x)$, where the function $f$ satisfying 
\begin{equation}
    \begin{aligned}
    \label{equation2.8}\setcounter{mysubequations}{1}
      \mysubnumber\quad & 0\leq f(x) \leq 1\\
      \mysubnumber\quad &\text{ For some } y>0, N>0, M>0,\int_{x}^{x+N}u(0,z) \dd z>y \text{ for all } x\leq -M,\\
      \mysubnumber\quad & \sup\{x\in\R, f(x)>0\}<\infty,\\
    \end{aligned} 
\end{equation}
	    then $$u_f(t,\tilde{m}_t+x)\to w(x), \text{ uniformly in } x \text{ as } t \to \infty,$$ where  $w$ is the unique solution (up to translation) of the equation \eqref{travelling}.
	   \end{theorem}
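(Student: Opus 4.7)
The plan is to follow Bramson's original probabilistic strategy: exploit the BBM representation \eqref{eq0.1}, reduce to the Heaviside initial condition by comparison, and identify the limit via the convergence in law of the centred maximum together with a barrier argument that pins down the logarithmic correction.

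First, I would observe that for the special choice $f=\indset{[0,\infty)}$ the function $u_f$ coincides with $x\mapsto \P(M_t\geq x)$. For this $f$ the convergence $u_f(t,\tilde m_t+x)\to w(x)$ is precisely the statement of \eqref{law of the maximum}, giving the candidate limit $w(x)=1-\E[\exp(-C^{*}Z_\infty e^{-\sqrt{2}x})]$. The next step is to propagate this to any $f$ satisfying \eqref{equation2.8}. The F-KPP equation obeys a comparison principle: $f_1\leq f_2$ implies $u_{f_1}\leq u_{f_2}$. Condition (iii) (support bounded above) lets us dominate $f$ pointwise by the indicator of a half-line, producing an upper bound on $u_f(t,\tilde m_t+x)$. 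Condition (ii) (non-degenerate mass on the left) allows, after an $O(1)$ translation, a matching lower bound of the same indicator form. Because the travelling wave $w$ is invariant under the F-KPP flow and only the centring shifts under translation of the initial data, both bounds converge to $w$ up to a deterministic horizontal shift; monotonicity of $w$ and equicontinuity of the family $u_f(t,\cdot)$ upgrade this to convergence uniform in $x$.

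The delicate point, and the heart of Bramson's contribution relative to the classical KPP result, is fixing the correct form $\tilde m_t=\sqrt{2}t-\tfrac{3}{2\sqrt{2}}\log t$. I would handle this via the barrier (truncation) method: restrict the product in \eqref{eq0.1} to particles whose ancestral trajectories stay below the line $s\mapsto \sqrt{2}s+K$. The contribution of particles crossing this barrier is controlled by Proposition~\ref{proposition 2.3} and is $O((K+1)e^{-\sqrt{2}K})$, hence negligible as $K\to\infty$. The truncated quantity $u_f^{K}$ solves a linearised F-KPP equation with an absorbing boundary; its leading-order asymptotics are driven by the principal Dirichlet eigenfunction, which produces exactly the weight $(\sqrt{2}t-x)e^{-\sqrt{2}(\sqrt{2}t-x)}$ characterising the derivative martingale $Z_t$ from \eqref{Derivative martingale}.

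The main obstacle is the sharp two-sided analysis of $u_f^{K}(t,\tilde m_t+x)$: one must show that after Girsanov-ing out the drift $\sqrt{2}$, the surviving Brownian functional behaves like the three-dimensional Bessel process of Section~\ref{Section 3.1}, and carry out the ensuing calculation to recover the constant $C^{*}$ and the polynomial correction $1/\sqrt{t}$ (source of the $-\tfrac{3}{2\sqrt{2}}\log t$ term). The ingredients are the hitting estimates \eqref{equation 2.2} and \eqref{eq 3.4}, the Bessel identity \eqref{definition Bessel}, and the tail bound on $M_t$ from Proposition~\ref{propsoition 2.4}. Combining these with the comparison step yields $u_f(t,\tilde m_t+x)\to w(x)$ uniformly in $x$, and the uniqueness (up to translation) of the solution to \eqref{travelling} identifies the limit as the unique travelling wave.
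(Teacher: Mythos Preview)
The paper does not prove this theorem at all: it is stated as a quotation of Bramson's classical result (Theorems~A and~B in \cite{zbMATH03562205}) and is used as a black box, with no accompanying proof environment. There is therefore nothing in the paper to compare your proposal against.

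Your sketch is a plausible high-level summary of the ideas underlying Bramson's memoir, but it is not a proof. The comparison-principle step is too loose as written: sandwiching $f$ between two shifted Heaviside data only gives $w(x+a)\leq \liminf u_f\leq \limsup u_f\leq w(x+b)$ for two \emph{different} shifts $a\neq b$, and you have not explained how to close that gap (this is exactly the hard part of Bramson's analysis, requiring his sharp $\psi$-representation rather than a simple monotonicity argument). Likewise, the claim that the truncated quantity ``solves a linearised F-KPP equation with an absorbing boundary'' whose asymptotics ``produce exactly'' the derivative-martingale weight is an assertion, not an argument; making this precise and quantitative is the content of Bramson's hundred-page memoir. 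If you intend to include a proof here, you would need to either reproduce those estimates in full or, more sensibly, do what the paper does and simply cite the result.
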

The next proposition  follows from Proposition $4.3$ and Lemma $4.5$ in \cite{zbMATH06247828}.
\begin{proposition}
	\label{propo0.5}
	Let $u_{f}$ be a solution of the F-KPP equation in the form of \eqref{eq0.1}  with the initial condition $u(0,x)=f(x)$ and  satisfying   the assumptions of  Theorem \ref{Theorem Bramson}. Then, for any fixed $\epsilon>0$, uniformly in $x\in[-\frac{1}{\epsilon}\sqrt{t}, -\epsilon\sqrt{t}]$, we have the convergence
	\begin{align}
	&\lim_{t\to \infty}\frac{e^{-\sqrt{2}x}}{(-x)}t^{3/2}e^{x^2/2t}u_{f}(t,\sqrt{2}t-x)=\gamma(f),
	\end{align}
	where $\gamma(f)=\lim_{r\to \infty}\sqrt{\frac{2}{\pi}}\int u_f(r, z+\sqrt{2}r) z e^{\sqrt{2}z} \dd z$ .
\end{proposition}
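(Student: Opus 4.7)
The plan is to apply a Markov decomposition of the FKPP solution at an intermediate time $t-r$ (with $1 \ll r \ll t$), reducing the asymptotic to Bramson's traveling-wave convergence (Theorem \ref{Theorem Bramson}) applied at time $r$, combined with a Brownian motion/Bessel estimate over the remaining interval of length $t-r$ that supplies the Gaussian correction $e^{-y^2/(2t)}$ and the ballot factor $y/t^{3/2}$. Writing $y = -x \in [\epsilon\sqrt{t}, \sqrt{t}/\epsilon]$, the Markov property at time $t-r$ gives
\[u_f(t, \sqrt{2}t + y) = 1 - \E\biggl[\prod_{u \in \mathcal{N}_{t-r}^1}\bigl(1 - u_f(r, \sqrt{2}r + y + \sqrt{2}(t-r) - X_u(t-r))\bigr)\biggr],\]
and the two-sided inequalities $\sum_i a_i - \tfrac{1}{2}(\sum_i a_i)^2 \leq 1 - \prod_i(1-a_i) \leq \sum_i a_i$ reduce the task to estimating the first and second moments of $S := \sum_u u_f(r, \sqrt{2}r + y + \sqrt{2}(t-r) - X_u(t-r))$.

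For the first moment, I would restrict using Proposition \ref{proposition 2.3} to particles whose trajectory stays below a suitable (linearly tilted) barrier, apply the many-to-one lemma \eqref{many to one lemma}, and use a Girsanov transformation sending $B_s \mapsto B_s - \sqrt{2}s$ to turn the expression into the probability that a standard Brownian motion ends near $y$ at time $t-r$ while respecting the barrier. Using the reflection principle together with the Bessel $h$-transform \eqref{definition Bessel} and transition density $p_s$ recalled in Section \ref{Section 3.1}, the resulting entropic ballot estimate produces a factor of order $y/(t-r)$, which combined with the Gaussian density $e^{-y^2/(2(t-r))}/\sqrt{t-r}$ and the Girsanov exponential weight $e^{-\sqrt{2}y}$ yields
\[\E[S] \sim \frac{y}{t^{3/2}} e^{-\sqrt{2}y - y^2/(2t)} \cdot \sqrt{\tfrac{2}{\pi}} \int u_f(r, \sqrt{2}r + z)\,z\,e^{\sqrt{2}z}\,\dd z,\]
uniformly in $y$. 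Bramson's theorem, combined with the traveling-wave asymptotic $w(\xi) \sim C^*(f)\,\xi\, e^{-\sqrt{2}\xi}$ as $\xi \to \infty$, then shows that the integral above converges to $\gamma(f)$ as $r \to \infty$.

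The second-moment contribution would be handled by the many-to-two formula for BBM: the dominant pairs have their most recent common ancestor near time $t-r$, producing a contribution of strictly smaller order than $\E[S]$ once $r \to \infty$, so that the lower bound $1 - \prod_i(1-a_i) \geq \sum_i a_i - \tfrac{1}{2}(\sum_i a_i)^2$ matches the upper bound. I expect the main technical obstacle to lie in the uniformity of the ballot estimate across the full range $y \in [\epsilon\sqrt{t}, \sqrt{t}/\epsilon]$: the barrier, the endpoint, and the Gaussian factor all depend on $y$ and $t$, and one must carefully choose the truncation to isolate the $y/t^{3/2}$ factor (as opposed to the $1/\sqrt{t}$ given by a naive first-moment bound without entropic repulsion), while simultaneously justifying the application of Bramson's theorem at the intermediate scale $r$ where the argument $\sqrt{2}r + z$ may range over a window growing with $r$.
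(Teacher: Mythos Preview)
Your approach is genuinely different from the paper's, and it has a real gap. The paper's proof is two lines: it invokes Proposition~4.3 and Lemma~4.5 of Arguin--Bovier--Kistler \cite{zbMATH06247828}, which already package the sharp two-sided bound $\rho(r)^{-1}\psi(r,t,\cdot)\le u_f(t,\cdot)\le \rho(r)\psi(r,t,\cdot)$ with $\rho(r)\to 1$, where $\psi$ is the explicit Brownian-bridge integral carrying the factor $y\,t^{-3/2}e^{-\sqrt{2}y-y^2/2t}$. Those results are in turn proved not via a first/second moment expansion of the McKean product, but via Bramson's Feynman--Kac representation of $u_f$ together with a maximum-principle comparison, which is what lets the barrier enter on \emph{both} sides.

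The gap in your scheme is precisely at the step you flag as delicate. The crude upper bound $1-\prod_i(1-a_i)\le \sum_i a_i$ gives $u_f(t,\sqrt{2}t+y)\le \E[S]$, and a direct many-to-one computation of $\E[S]$ (even after Girsanov) yields order $t^{-1/2}e^{-\sqrt{2}y-y^2/2t}$, not $y\,t^{-3/2}e^{-\sqrt{2}y-y^2/2t}$: the dominant contribution comes from particles near the line $\sqrt{2}s$ at time $t-r$, where $u_f(r,\cdot)\approx 1$, and there is no ballot factor. Inserting the barrier into the \emph{upper} bound is the whole difficulty---you cannot simply drop the unrestricted particles from $\sum_i a_i$, because that makes the sum smaller, not larger. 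Bramson's maximum-principle argument is exactly what circumvents this. Similarly, your lower bound requires $\E[S^2]=o(\E[S])$, but even with a barrier the truncated second moment is typically of the \emph{same} order as the truncated first moment (this is why Paley--Zygmund gives only the correct order, not the sharp constant); the claim that the many-to-two contribution is ``strictly smaller order'' once $r\to\infty$ is not correct as stated. If you want a self-contained probabilistic proof, you would need to reproduce Bramson's Feynman--Kac comparison rather than the product inequalities you wrote down.
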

\begin{proof}
		 Fix $\epsilon>0$, using Proposition $4.3$ in \cite{zbMATH06247828}  for $r$ large enough, $t\geq 8r$ and $-x\geq 8r-\frac{3}{2\sqrt{2}}\log(t)$, we have \[\rho^{-1}(r)\psi(r,t,-x+y+\sqrt{2})t\leq u_f(t,-x+y+\sqrt{2}t)\leq \rho(r)\psi(r,t,-x+y+\sqrt{2})\] where $\rho(r)\to 1$ as $r\to \infty$ and 
		\[\psi(r,t,-x+y+\sqrt{2})=\frac{e^{-\sqrt{2}(y-x)}}{\sqrt{2\pi(t-r)}}\int_{0}^{\infty}u_f(r,z+\sqrt{r})e^{\sqrt{2}z}e^{-(z+x-y)^2/2(t-r)}\left(1-e^{-2z\frac{(x+\frac{3}{2\sqrt{2}\log(t)})^2}{t-r}}\right)dz.\]
		Using Lemma $4.5$ in \cite{zbMATH06083948}, and since $\rho(r)\to 1$ we have \begin{align*}&\limsup_{t \to \infty}\sup_{x\in{[-\frac{1}{\epsilon}\sqrt{t}, -\epsilon\sqrt{t}]}}\frac{e^{\sqrt{2}(y-x)}}{-x}t^{3/2}e^{-a^2/2}u_f(t,-x+y+\sqrt{2}t)\\&\leq \liminf_{r \to \infty}\limsup_{t \to \infty}\sup_{x\in{[-\frac{1}{\epsilon}\sqrt{t}, -\epsilon\sqrt{t}]}}\frac{e^{\sqrt{2}(-x+y)}}{-x}t^{3/2}e^{-a^2/2}\psi(r,t,-x+y+\sqrt{2})\leq \gamma(f) \end{align*} and  similarly \[\liminf_{t \to \infty}\inf_{x\in{[-\frac{1}{\epsilon}\sqrt{t}, -\epsilon\sqrt{t}]}}\frac{e^{\sqrt{2}(-x+y)}}{-x}t^{3/2}e^{-a^2/2}u_f(t,-x+y+\sqrt{2}t)\geq \gamma(f)\] for some constant $\gamma(\phi)$ given in Lemma $4.5$ in \cite{zbMATH06247828}, which completes the proof.
		\end{proof}

 In particular, by setting $f(x)=\ind{x\leq 0}$, we have $u(t,\sqrt{2}t-x+y)=\mathbb{P}(M_t>\sqrt{2}t-x+y)$, and the following uniform estimate of the tail of $M_t$. 
\begin{corollary}
	\label{eq3}
	For all $\epsilon>0$ and $y\in{\R_+}$, we have
	\begin{align}
	\label{tail estim}
	&\mathbb{P}(M_t>\sqrt{2}t-x+y)\sim_{t\to \infty}\frac{C^{*}}{t^{3/2}} (-x) e^{-\sqrt{2}(y-x)}e^{-x^2/2t}
	\end{align}
	uniformly in $x\in{[-\frac{1}{\epsilon}\sqrt{t}, -\epsilon\sqrt{t}]}$,  where the constant $C^*$ is the one introduced in \eqref{law of the maximum}.
	\end{corollary}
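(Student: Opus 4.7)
The plan is to apply Proposition~\ref{propo0.5} with the initial condition $f(z) = \ind{z \leq 0}$. With this choice, the relation \eqref{eq0.1} of the preceding lemma reads
\[
u_f(t, z) = 1 - \E\Bigl[\prod_{u \in \calN_t} \ind{X_u(t) < z}\Bigr] = \P(M_t \geq z),
\]
and the hypotheses \eqref{equation2.8} are immediate: $f$ is $[0,1]$-valued, $\int_x^{x+1} f(z)\,\dd z = 1$ for every $x \leq -1$, and $\sup\{z : f(z) > 0\} = 0 < \infty$. Therefore Proposition~\ref{propo0.5} applies to $u_f$ with some constant $\gamma(f)$.

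To recover the form of \eqref{tail estim}, I would then substitute $x \mapsto x - y$ in the conclusion of Proposition~\ref{propo0.5}. For a fixed $y \in \R_+$ and $x \in [-\tfrac{1}{\epsilon}\sqrt{t}, -\epsilon\sqrt{t}]$, the shifted point $x - y$ eventually lies in $[-\tfrac{2}{\epsilon}\sqrt{t}, -\tfrac{\epsilon}{2}\sqrt{t}]$, so the uniform asymptotic still applies and gives
\[
\P(M_t > \sqrt{2}t - x + y) \sim \gamma(f)\, \frac{y - x}{t^{3/2}}\, e^{-\sqrt{2}(y - x)}\, e^{-(x-y)^2/(2t)}
\]
uniformly in $x$ in the stated range. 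Since $y$ is fixed while $-x \to \infty$, one has $(y - x) \sim (-x)$; writing $e^{-(x-y)^2/(2t)} = e^{-x^2/(2t)} e^{(2xy - y^2)/(2t)}$, the extra factor tends to $1$ uniformly, as $(2xy - y^2)/(2t) = O(y/\sqrt{t})$ on the considered range. Collecting these simplifications produces the right-hand side of \eqref{tail estim}, up to the value of the constant.

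The remaining and only subtle step is to identify $\gamma(\ind{z \leq 0})$ with the constant $C^*$ of \eqref{law of the maximum}. This identification is carried out by matching the uniform tail asymptotic just obtained with the Lalley--Sellke convergence of $M_t - m_t$ to a randomly shifted Gumbel distribution with shift $\tfrac{1}{\sqrt{2}}\log(C^* Z_\infty)$, as detailed in \cite[Section 4]{zbMATH06247828}. The transcription of Proposition~\ref{propo0.5} is otherwise routine; the identification of the multiplicative constant is the only non-trivial point of the argument.
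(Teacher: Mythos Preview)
Your argument is correct and follows exactly the route the paper takes: the paper simply notes (in the sentence immediately preceding the corollary) that setting $f(x)=\ind{x\leq 0}$ in Proposition~\ref{propo0.5} yields $u_f(t,\sqrt{2}t-x+y)=\P(M_t>\sqrt{2}t-x+y)$, and states the corollary without further justification. Your write-up merely spells out the substitution $x\mapsto x-y$, the harmless asymptotic simplifications, and the identification of the constant $\gamma(\ind{\cdot\leq 0})=C^*$ via \cite{zbMATH06247828}, all of which the paper leaves implicit.
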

	We end this section by an uniform estimate of the Laplace transform of the extremal process of the BBM that generalizes \eqref{tail estim}.	Denote by $\cal{T}$ the set of non-negative, continuous, bounded functions $\phi :\R\mapsto\R_+$ with support bounded on the left.
\begin{corollary}
\label{corollary extremal process}
Fix $\epsilon>0$. Setting \[\mathcal{E}_t(x)=\sum_{u \in \mathcal{N}_t}\delta_{X_{u}(t) - \sqrt{2}t+x},\] 
 we have for all $\phi \in{\cal{T}}$
\[\E\left(1-e^{-\sum_{u\in{\mathcal{N}_t}}\phi(x+X_u(t)-\sqrt{2}t)}\right)=C^*\sqrt{2} \frac{e^{\sqrt{2}x-\frac{x^2}{2t}}}{t^{3/2}}\int e^{-\sqrt{2} z} \left(1 - \E(e^{-\crochet{\mathcal{D},\phi(.+z)}})\right) \dd z(1+o(1)),\]
uniformly in $x\in[-\frac{1}{\epsilon}\sqrt{t}, -\epsilon\sqrt{t}]$, as $t \to \infty$.
\end{corollary}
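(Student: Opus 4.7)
The plan is to exhibit the left-hand side as a solution of the F-KPP equation evaluated at $\sqrt{2}t - x$, apply Proposition~\ref{propo0.5} to obtain the uniform asymptotic in terms of an abstract constant $\gamma(f)$, and then identify $\gamma(f)$ with the decorated Poisson point process integral appearing on the right-hand side.

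First I would set $f(y) := 1 - e^{-\phi(-y)}$, so that the definition \eqref{eq0.1} gives
\[u_f(t, \sqrt{2}t - x) = \E\!\left[1 - \exp\!\Bigl(-\sum_{u \in \calN_t}\phi(x + X_u(t) - \sqrt{2}t)\Bigr)\right],\]
which matches the LHS. Since $\phi \in \mathcal{T}$ is non-negative, continuous, bounded, with $\mathrm{supp}(\phi) \subseteq [a, \infty)$, the datum $f$ takes values in $[0,1]$, is continuous, vanishes on $(-a, \infty)$, and (assuming $\phi \not\equiv 0$, else both sides vanish trivially) is bounded below by a positive constant on some interval; hence the three Bramson conditions in \eqref{equation2.8} are satisfied. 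Applying Proposition~\ref{propo0.5} then yields
\[u_f(t, \sqrt{2}t - x) = \gamma(f)\,\frac{-x}{t^{3/2}}\,e^{\sqrt{2}x - x^2/(2t)}\,(1+o(1)), \qquad \text{uniformly in } x \in [-\sqrt{t}/\epsilon, -\epsilon\sqrt{t}].\]

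It remains to prove the identity
\[\gamma(f) = \sqrt{2}\,C^*\int_{\R} e^{-\sqrt{2}z}\bigl(1 - \E[e^{-\crochet{\mathcal{D},\phi(\cdot+z)}}]\bigr)\,dz.\]
For this I would invoke the decorated Poisson point process description of the extremal process of BBM: $\sum_u \delta_{X_u(t) - \tilde{m}_t}$ converges in law to $\mathcal{E}_\infty$, a DPPP with intensity $\sqrt{2}\,C^*Z_\infty\,e^{-\sqrt{2}x}dx$ and decoration $\mathcal{D}$, whose Laplace functional (by the Poisson exponential formula and independence of the decorations from the intensity atoms) reads
\[\E\!\left[e^{-\crochet{\mathcal{E}_\infty, \phi}}\right] = \E\!\left[\exp\!\left(-\sqrt{2}\,C^*Z_\infty\!\int e^{-\sqrt{2}z}\bigl(1 - \E[e^{-\crochet{\mathcal{D},\phi(\cdot+z)}}]\bigr)\,dz\right)\right].\]
Benchmarking against $\phi = K\indset{[y,\infty)}$ with $K \to \infty$ reduces the LHS to $\P(M_t > \sqrt{2}t - x + y)$; comparison with Corollary~\ref{eq3} recovers $\gamma(\indset{(-\infty,0]}) = C^*$, and a first-order expansion in $Z_\infty$ of the DPPP Laplace functional, paired with the same matching procedure for general $\phi$, upgrades this to the full identification, exactly as in Proposition~$4.3$ of \cite{zbMATH06247828}.

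The hardest step is this last identification. The constant $\gamma(f)$ is defined abstractly through an $r \to \infty$ integral in Proposition~\ref{propo0.5}, whereas the DPPP integral naturally emerges at the $O(\log t)$ scale of Bramson's centring $\tilde{m}_t$, not at the $\sqrt{t}$ scale at which Proposition~\ref{propo0.5} directly applies. Bridging these two regimes — verifying that the logarithmic shift between $\sqrt{2}t$ and $\tilde{m}_t$ leaves the leading constant unchanged and that the Gaussian factor $e^{-x^2/(2t)}$ transitions smoothly to $1$ — is where the argument from \cite{zbMATH06247828} is invoked.
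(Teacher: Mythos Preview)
Your reduction to Proposition~\ref{propo0.5} via $f(y)=1-e^{-\phi(-y)}$, the verification of Bramson's conditions \eqref{equation2.8}, and the resulting asymptotic $u_f(t,\sqrt{2}t-x)\sim\gamma(f)\,(-x)\,t^{-3/2}e^{\sqrt{2}x-x^2/(2t)}$ are exactly the paper's argument.

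The divergence is in identifying $\gamma(f)$. The paper bypasses your final two paragraphs entirely by citing Corollary~4.12 of \cite{zbMATH06247828}, which gives directly
\[
\gamma(\phi)=C^*\sqrt{2}\int e^{-\sqrt{2}z}\bigl(1-\E[e^{-\crochet{\mathcal{D},\phi(\cdot+z)}}]\bigr)\,dz.
\]
Your proposed alternative --- a ``first-order expansion in $Z_\infty$'' of the DPPP Laplace functional, then matching --- does not go through as written: $\E[Z_\infty]=\infty$, so no Taylor expansion of $\E[e^{-cZ_\infty}]$ in $c$ is available, and more fundamentally the DPPP limit describes $u_f$ near $\tilde m_t$ whereas $\gamma(f)$ is extracted from the integral formula $\lim_{r\to\infty}\sqrt{2/\pi}\int u_f(r,z+\sqrt{2}r)\,ze^{\sqrt{2}z}\,dz$, so there is no common regime in which to match coefficients. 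Note also that Proposition~4.3 of \cite{zbMATH06247828} is the $\psi$-sandwich already consumed inside the proof of Proposition~\ref{propo0.5}; the result that actually performs the identification you need is Corollary~4.12.
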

\begin{proof}
The proof follows from Proposition \ref{propo0.5}. By setting $f(x)=1-e^{-\phi(-x)}$, we have $$u_f(t,\sqrt{2}t-x-X_u(t))=\E\left(1-e^{-\sum_{u\in{\mathcal{N}_t}}\phi(x+X_u(t)-\sqrt{2}t)}\right).$$
Now observe that for all $\phi \in{\cal{{T}}}$ the function $x\mapsto f(x)=1-e^{-\phi(-x)}$ satisfies assumptions of Theorem \ref{Theorem Bramson}, then in view of Proposition \ref{propo0.5}, we obtain 
\begin{align*}
    &\E\left(1-e^{-\sum_{u\in{\mathcal{N}_t}}\phi(x+X_u(t)-\sqrt{2}t)}\right)= \frac{e^{\sqrt{2}x-\frac{x^2}{2t}}}{t^{3/2}} \gamma(\phi)(1+o(1)).
\end{align*}
On the other hand, it is known, using Corollary $4.12$ in \cite{zbMATH06247828} ,that the constant $\gamma(\phi)$ can be expressed through the decoration $\cal{D}$ defined in \eqref{Decoration}, as follows $$ \gamma(\phi)=C^*\sqrt{2} \int e^{-\sqrt{2} z} \left(1 - \E(e^{-\crochet{\mathcal{D},\phi(.+z)}})\right),$$  where the constant $C^*$ is introduced in \eqref{law of the maximum},  which completes the proof.
\end{proof}

	\section{Proof of the main result}

 Using \cite[Lemma 4.1]{berestycki2018simple}, it is enough to show that for all $\phi \in{\cal{T}}$
	\begin{align*}
	&\lim_{t\to \infty }E\left( e^{ -\crochet{\mathcal{\hat{E}}_t,\phi}} \right)=\mathbb{E}\left(\exp(-\alpha C^{*}\sqrt{2} Z_{\infty} \int e^{-\sqrt{2} z} \left(1 - \E(e^{-\crochet{\mathcal{D},\phi(.+z)}})\right) \dd z) \right).
	\end{align*}
	 where $\mathcal{D}$ is the law of the point measure defined in \eqref{Decoration}.
	 
The first step of the proof of Theorem \ref{main result} is to show that for all $A \geq 0$ and $\epsilon>0$, every particle $u$ of type $2$ to the right of $m_t-A$ at time $t$ satisfy 
$T(u) \in{[\epsilon t, (1-\epsilon)t]}$ with high probability.

\begin{proposition}
	
	Fix $A>0$, $m_t=\sqrt{2}t-\frac{1}{2\sqrt{2}}\log(t)$. We have
	\begin{align}
	\label{prop1.1}
	&\lim_{\epsilon \to 0} \limsup_{t\to \infty}  \P(\exists u \in \mathcal{N}^2_t : T(u)\notin [\epsilon t, (1-\epsilon)t], X_u(t)\geq m_t-A)=0	
	\end{align}
\end{proposition}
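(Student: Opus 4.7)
The plan is a first-moment argument preceded by a barrier truncation. Fix $K > 0$ large. By Proposition \ref{proposition 2.3}, the event $E_K = \{X_u(s) \leq \sqrt{2}\,s + K \text{ for all } s \geq 0 \text{ and } u \in \mathcal{N}^1_s\}$ holds outside a set of probability at most $C(K+1)e^{-\sqrt{2}K}$, which can be made arbitrarily small by choosing $K = K(\eta)$. On $E_K$, Markov's inequality combined with Proposition \ref{cor:poissonSum}, after conditioning on the type-$2$ subtree rooted at $u$, bounds the probability of interest by
\[
\alpha \int_{A_\epsilon(t)} e^s\, \E\!\left[\ind{B_r \leq \sqrt{2}\,r + K,\, r \leq s}\; \P\bigl(M_{t-s} \geq m_t - A - B_s \bigm| B_s\bigr)\right] ds,
\]
where $A_\epsilon(t) = [0, \epsilon t] \cup [(1-\epsilon)t,\, t]$, the process $(B_r)_{r \geq 0}$ is a standard Brownian motion, and the inner probability refers to an independent BBM of duration $t-s$.

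In the sub-region where $\mu(s) := -\tfrac{1}{2\sqrt{2}}\log t + \tfrac{3}{2\sqrt{2}}\log(t-s) - A$ exceeds $K$ (equivalently $t - s \gtrsim t^{1/3}$), Proposition \ref{propsoition 2.4} gives the tail bound $\P(M_{t-s} \geq m_t - A - B_s) \leq C(1 + y_+)\exp(-\sqrt{2}\,y - y^2/(2(t-s)))$ with $y = m_t - A - B_s - \tilde m_{t-s}$. Applying the Girsanov shift $B_r \mapsto \sqrt{2}\,r + W_r$, the prefactor $e^s \cdot e^{-\sqrt{2}\,y}$ collapses to $e^{\sqrt{2} A}\sqrt{t}\,(t-s)^{-3/2}$, the path constraint becomes $\{W_r \leq K,\, r \leq s\}$, and $y$ becomes $\mu(s) - W_s$. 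Using the reflection-principle density bound $\P(W_s \in dw,\, W_r \leq K \,\forall r \leq s) \leq 2K(K-w)_+ s^{-3/2}e^{-w^2/(2s)} dw/\sqrt{2\pi}$ (in the spirit of Section \ref{Section 3.1}), the constrained expectation admits an explicit Gaussian estimate. After integration over $s$ the factor $\sqrt{t}$ is absorbed, producing a bound of order $C_{K,A}\sqrt{\epsilon}\,\log t / \sqrt{t}$ from the early range $[0, \epsilon t]$ and a similar bound from the intermediate late range.

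The residual regime $s \in [t - t^{1/3},\, t]$, where Bramson's bound no longer applies, is treated separately by the elementary estimate $\P(M_r \geq z) \leq e^r\,\P(B_r \geq z)$: combined with the barrier constraint from $E_K$ and a further Girsanov transform, the contribution reduces to a Gaussian tail estimate for the full spine $B_t$ against the threshold $m_t - A$, which is easily shown to be negligible. The main obstacle I expect is precisely this late-birth regime, since the first moment of the number of type-$2$ descendants above $m_t - A$ with $T(u) \in [(1-\epsilon)t,\, t]$ grows like $\epsilon t$, so Markov on its own cannot yield a small bound; it is the simultaneous use of the barrier event $E_K$ (which truncates the anomalously high ancestors that inflate the expectation) together with the Gaussian factor $e^{-(\mu(s) - W_s)^2/(2(t-s))}$ that produces an integrand summable as $s$ approaches $t$.
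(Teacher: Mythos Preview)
Your strategy matches the paper's exactly: barrier truncation via Proposition~\ref{proposition 2.3}, Markov's inequality with the many-to-one formula of Proposition~\ref{cor:poissonSum}, a Girsanov shift, and the tail bound of Proposition~\ref{propsoition 2.4} for $M_{t-s}$. The paper handles the constrained Brownian expectation through the Bessel $h$-transform~\eqref{definition Bessel} rather than your reflection-principle density, and absorbs your residual zone $[t-t^{1/3},t]$ into a single $(t-s+1)^{3/2}$ regularization; these are cosmetic differences.

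One quantitative slip: your claimed early-range bound $C_{K,A}\sqrt{\epsilon}\,\log t/\sqrt{t}$ misses an additive $O(K\epsilon)$ term. The integrand contains $(\mu(s)-W_s)_+$, whose piece $(-W_s)_+$ has constrained expectation $\E\bigl[(-W_s)_+\ind{W_r\leq K,\,r\leq s}\bigr]$ of order $K$ (not decaying in $s$), so $\int_0^{\epsilon t} t^{-1}\cdot K\,ds = K\epsilon$ survives the $t\to\infty$ limit. The paper records precisely this term in~\eqref{equation} and an analogous $2\alpha CK\epsilon$ in~\eqref{cinquième} for the late range. This is harmless for the conclusion since $K$ is fixed before sending $\epsilon\to 0$, but your intermediate estimate is too strong as stated.
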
	
\begin{proof} 
	
	 We first set, for $\epsilon,A,K \geq 0$ and $t \geq 0$:
	\[
	Z_t(A,\epsilon, K) = \sum_{u \in \mathcal{B}} \ind{T(u)\leq \epsilon t} \ind{X_{u}(r) \leq r\sqrt{2} + K, r\leq T(u)} \ind{M^{u}_t\geq m_t-A},
	\] and
	\[
	\tilde{Z}_t(A,\epsilon, K) = \sum_{u \in \mathcal{B}} \ind{T(u)\geq (1-\epsilon)t} \ind{X_{u}(r) \leq r\sqrt{2} + K, r\leq T(u) } \ind{M^{u}_t\geq m_t-A},
	\]
	where $M^u_t$ is the position of the rightmost descendant at time $t$ of the individual $u$. 
	Observe that by Markov inequality and Proposition \ref{proposition 2.3} we have
	\begin{align*}
	&\P(\exists u \in \mathcal{N}^2_t: T(u)\notin [\epsilon t, (1-\epsilon)t] , X_u(t)\geq m_t-A) &\\& \leq \P\left( \exists t \geq 0, u \in \mathcal{N}^1_t : X_u(t) \geq \sqrt{2} s + K \right) + \P(Z_t(A,\epsilon,K) \geq 1)+\P(	\tilde{Z}_t(A,\epsilon, K)\geq 1) \\
	&\leq C(K+1) e^{-\theta K} + \E(Z_t(A,\epsilon,K))+\E(	\tilde{Z}_t(A,\epsilon, K)). 
	\end{align*}
	Hence by fixing $K$ large enough, it is enough to prove that $\limsup_{t \to \infty} \E(Z_t(A,\epsilon,K))$ and $\limsup_{t \to \infty} \E(\tilde{Z}_t(A,\epsilon,K))$ are both $o_\epsilon(1)$ to complete the proof.
	
	Using the branching property and Corollary \ref{cor:poissonSum}, we have
	\begin{align*}
	\E(Z_t(A,\epsilon,K))
	&= \E\left( \sum_{u \in \mathcal{B}} \ind{T(u)\leq \epsilon t} \ind{X_{u}(r) \leq \sqrt{2}r + K, r\leq T(u)} F\left(t-T(u),X_u(T(u))\right)  \right)\nonumber\\
	&= \alpha \int_0^{\epsilon t} e^{s} \E\left( F\left(t-s,B_s\right) \ind{B_r \leq \sqrt{2} r + K, r\leq s } \right)\dd s\\& =\alpha \int_0^{\epsilon t}  \E\left(e^{-\sqrt{2}B_s} F\left(t-s,B_s+\sqrt{2}s\right) \ind{B_r \leq  K, r\leq s } \right)\dd s , \label{eqn:momentII}
	\end{align*}
	where we have set $F(r,x) = \P^{(2)}\left( x + M_r \geq m_t-A\right)$.
	
	By Proposition \ref{proposition 2.3}, there exists $C > 0$ such that for all $x \in \R$ and $t \geq 0$, we have
	\[
	\P^{(2)}\left( M_t \geq m_t + x \right) \leq C (1 + x_+) e^{-\sqrt{2} x},
	\]
	so that for all $s \leq t$,
	\begin{align}
	F(t-s,x) &= \P^{(2)}\left( M_{t-s} \geq \sqrt{2}(t-s) - \tfrac{1}{2\sqrt{2}} \log(t)- A - (x-\sqrt{2}s) \right) \nonumber\\
	&\leq C \frac{\sqrt{t+1}}{(t-s+1)^{\frac{3}{2}}}\left(1 + \frac{\log(t)}{\sqrt{2}}+(-x)_+ \right) e^{-\sqrt{2} (\sqrt{2} s - x - A)} \label{eqn:boundPhiII}.
	\end{align}
	As a result using that $s\leq \epsilon t$
	\begin{align*}
	&\E(Z_t(A,\epsilon,K))\leq \alpha \frac{2C}{t}\int_{0}^{\epsilon t}\E\left((c + \frac{\log(t)}{\sqrt{2}}+ (-B_s)_+) \ind{B_r \leq K, r\leq s }\right)\dd s.
	\end{align*} 
	Using \eqref{equation 2.2} and the definition of Bessel process \eqref{definition Bessel}, we get
	\begin{align}
	\label{equation}
	&\E(Z_t(A,\epsilon,K))\leq  \frac{\alpha C}{t} (c + \frac{\log(t)}{\sqrt{2}})\int_{0}^{\epsilon t} \frac{1}{\sqrt{s}}\dd s+2K\alpha C\epsilon= \frac{\alpha C}{\sqrt{t}} (c + \frac{\log(t)}{\sqrt{2}})\sqrt{\epsilon}+2K\alpha C\epsilon
	\end{align}
	We now estimate $\E(\tilde{Z}_t(A,\epsilon, K))$.  
	Using similar calculation we have 
	\begin{align*}
	& \E(\tilde{Z}_t(A,\epsilon, K))\leq \alpha \int_{(1-\epsilon)t}^{1}  \E\left(e^{-\sqrt{2}B_s} F\left(t-s,B_s+\sqrt{2}s\right) \ind{B_r \leq  K, r\leq s } \right)\dd s 
	\end{align*}
	where again $F(r,x) = \P^{(2)}\left( x + M_r \geq m_t-A\right)$. Using Proposition \ref{eq3} 
	we have the following upper bound
	\begin{align*}
	&\E(\tilde{Z}_t(A,\epsilon,K))\leq \alpha C\int_{(1-\epsilon)t}^{t} \frac{\sqrt{t+1}}{(t-s+1)^{\frac{3}{2}}}\E\left(e^{-\frac{B_s^2}{2(t-s)}}\left(c + \frac{\log(t)}{\sqrt{2}}+ (-B_s)_+ \right)\ind{B_r \leq K, r\leq s }\right)\dd s.
	\end{align*}
	By the definition of a Bessel process w obtain 
	\begin{align}
	&\nonumber\E(\tilde{Z}t(A,\epsilon,K))\leq \alpha C\int_{(1-\epsilon)t}^{t} \frac{\sqrt{t+1}}{(t-s+1)^{\frac{3}{2}}}\E_{K}\left(\frac{K}{R_s}e^{-\frac{R_s^2}{2(t-s)}}\left(c + \frac{\log(t)}{\sqrt{2}}+R_s\right) \right)\dd s\\&\label{deuxième}\leq  2 \alpha C\int_{(1-\epsilon)t}^{t} \frac{1}{(t-s+1)^{\frac{3}{2}}}\E_{K/\sqrt{s}}\left(\frac{K}{R_1}e^{-\frac{sR_1^2}{2(t-s)}}\left(c + \frac{\log(t)}{\sqrt{2}}\right) \right)\dd s\\&+ \label{troisième}\alpha CK\int_{(1-\epsilon)t}^{t} \frac{\sqrt{t+1}}{(t-s+1)^{\frac{3}{2}}}\E_{K/\sqrt{s}}\left(e^{-\frac{sR_1^2}{2(t-s)}}\right)ds.
	\end{align}
	where we used Bessel scaling in \eqref{deuxième}.
	
	On the one hand, we know that the density of $R_1$ under $\P_x$ for $x>0$ is equal to

	\[y\mapsto\frac{y}{x}\frac{e^{-(y-x)^2/2}}{\sqrt{2\pi}}(1-e^{-2xy})\ind{y>0}.\] 
	
	Using that for $x,y>0$, $1-e^{-2xy}\leq 2xy$  we have
	\begin{align*}
	&\E_x\left(\frac{1}{R_1}e^{-\frac{sR_1^2}{2(t-s)}}\right)=\frac{1}{\sqrt{2\pi}}\int_{0}^{\infty}\frac{1}{x}e^{-\frac{sy^2}{2(t-s)}}e^{-(y-x)^2/2}(1-e^{-xy})dy\\&\leq \frac{1}{\sqrt{2\pi}}\int_{0}^{\infty}ye^{-\frac{sy^2}{2(t-s)}}e^{-(y-x)^2/2}dy= \frac{2}{\sqrt{2\pi}}\int_{-x(t-s/t)}^{\infty}(y+x(t-s/t))e^{-\frac{t y^2}{2(t-s)}}e^{-x^2s/2t}dy.
	\end{align*}
	Plugging this in equation $\eqref{deuxième}$ and by the change of variable $u=\frac{s}{t}$, we have 
	\begin{align}
	&\nonumber \int_{(1-\epsilon)t}^{t} \frac{\sqrt{t+1}}{\sqrt{s}(t-s+1)^{\frac{3}{2}}}\E_{x}\left(\frac{K}{R_1}e^{-\frac{sR_1^2}{2(t-s)}}\left(c + \frac{\log(t)}{\sqrt{2}}\right) \right)\dd s\\&\label{quatrième}\leq  C \frac{\left(c + \frac{\log(t)}{\sqrt{2}}\right)}{\sqrt{t}}\int_{1-\epsilon}^{1}\int_{-x(1-u)}^{\infty} \frac{e^{-\frac{y^2}{2(1-u)}}}{(1-u)^{\frac{3}{2}}}(y+x(1-u)) e^{-x^2u/2}dydu
	\end{align}
	with  $x=K/\sqrt{tu}$.
	On the other hand, we bound 
	\begin{align*}
	&\int_{-x(1-u)}^{\infty} \frac{e^{-\frac{y^2}{2(1-u)}}}{(1-u)^{\frac{3}{2}}}(y+x(1-u)) e^{-x^2u/2}dy\leq  \frac{e^{-\frac{x^2(1-u)}{2}}}{(1-u)^{\frac{1}{2}}}+x.
	\end{align*}
	Plugging this in \eqref{quatrième}, for $t$ large enough  we deduce that 
	
	\begin{align*}
	& \int_{(1-\epsilon)t}^{t} \frac{\sqrt{t+1}}{\sqrt{s}(t-s+1)^{\frac{3}{2}}}\E_{K/\sqrt{s}}\left(\frac{K}{R_1}e^{-\frac{sR_1^2}{2(t-s)}}\left(c + \frac{\log(t)}{\sqrt{2}}\right) \right)\dd s\\&\nonumber\leq  C \frac{\left(c + \frac{\log(t)}{\sqrt{2}}\right)}{\sqrt{t}} \int_{0}^{\epsilon}\frac{e^{-\frac{K^2u}{2t(1-u)}}}{\sqrt{u}}du+\frac{C\epsilon}{\sqrt{t}}.
	\end{align*}
	Similarly we bound equation $\eqref{troisième}$
	\begin{align*}
	& \int_{(1-\epsilon)t}^{t} \frac{\sqrt{t+1}}{(t-s+1)^{\frac{3}{2}}}\E_{x}\left(e^{-\frac{sR_1^2}{2(t-s)}}\right)\dd s\\&\nonumber\leq \int_{1-\epsilon}^{1}\int_{-x(1-u)}^{\infty} \frac{e^{-\frac{y^2}{2(1-u)}}}{(1-u)^{\frac{3}{2}}}y(y+x(1-u)) e^{-x^2u/2}dydu\\&\leq \int_{1-\epsilon}^{1} 1+ \sqrt{1-u}e^{-\frac{K^2(1-u)}{2tu}}du.
	\end{align*}
	We finally obtain, for $t$ large enough
	\begin{align}
	&\label{cinquième}\E(\tilde{Z}_t(A,\epsilon,K))\\&\leq \nonumber 2C\alpha \frac{\left(c + \frac{\log(t)}{\sqrt{2}}\right)}{\sqrt{t}} \int_{0}^{\epsilon}\frac{e^{-\frac{K^2u}{2t(1-u)}}}{\sqrt{u}}du+\frac{C\epsilon}{\sqrt{t}}+ \alpha C K\int_{1-\epsilon}^{1} 1+ \sqrt{1-u}e^{-\frac{K^2(1-u)}{2tu}}du\\&\nonumber\leq 2\alpha C \frac{\left(c + \frac{\log(t)}{\sqrt{2}}\right)}{\sqrt{t}}\sqrt{\epsilon}+\frac{C\epsilon}{\sqrt{t}}+2\alpha C K\epsilon,
	\end{align}
		letting $t\to \infty$ then $\epsilon\to 0$ in  \eqref{equation} and \eqref{cinquième}  we conclude that
	
	\[\lim_{\epsilon \to 0} \limsup_{t\to \infty} \mathbb{P}\left(\exists u \in \mathcal{N}^2_t: X_u(t)\geq m_t-A, T(u)\notin [\epsilon t, (1-\epsilon)t] \right)=0,
	\] which completes the proof.\end{proof}


We  now show that, with high probability, every particle of type $2$ that contributes to the extremal process of the BBM satisfy $X_{u}(T(u))- \sqrt{2}T(u)\notin[-\frac{1}{\epsilon}\sqrt{t}, -\epsilon \sqrt{t}]$.
\begin{proposition}
	\label{proposition6}	
	Fix $A>0$. We have
	\begin{align*}
	&\lim_{\epsilon \to 0} \limsup_{t\to \infty} \mathbb{P}\left(\exists u \in \mathcal{N}^2_t:X_u(t)\geq m_t-A, X_{u}(T(u))- \sqrt{2}T(u)\notin[-\frac{1}{\epsilon}\sqrt{t}, -\epsilon \sqrt{t}]\right)=0
	\end{align*}
\end{proposition}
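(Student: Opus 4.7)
The plan is to follow the same first-moment strategy as in the previous proposition, splitting the bad event into the two sides of the forbidden interval. Write $\mathcal{E}^{(+)}_u=\{X_u(T(u))-\sqrt{2}T(u)>-\epsilon\sqrt{t}\}$ and $\mathcal{E}^{(-)}_u=\{X_u(T(u))-\sqrt{2}T(u)<-\epsilon^{-1}\sqrt{t}\}$, and introduce the truncated counts
\[
Z^{(\pm)}_t(A,\epsilon,K)=\sum_{u\in\mathcal{B}}\ind{T(u)\in[\epsilon t,(1-\epsilon)t]}\ind{X_u(r)\leq\sqrt{2}r+K,\,r\leq T(u)}\ind{\mathcal{E}^{(\pm)}_u}\ind{M^{u}_t\geq m_t-A}.
\]
Thanks to Proposition \ref{proposition 2.3} (envelope $\sqrt{2}s+K$, cost $C(K+1)e^{-\sqrt{2}K}$) and the previous proposition (reduction to $T(u)\in[\epsilon t,(1-\epsilon)t]$), it suffices to show that for each fixed $K$, $\limsup_{t\to\infty}\E(Z^{(\pm)}_t(A,\epsilon,K))=o_\epsilon(1)$.

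Applying Proposition \ref{cor:poissonSum} followed by the same Girsanov shift as in the previous proof and using Proposition \ref{propsoition 2.4} to control $F(r,x)=\P(x+M_r\geq m_t-A)$, one gets
\[
\E(Z^{(\pm)}_t)\leq C\alpha\int_{\epsilon t}^{(1-\epsilon)t}\frac{\sqrt{t+1}}{(t-s+1)^{3/2}}\,\E\!\left(\bigl(1+\log t+(-B_s)_+\bigr)e^{-\frac{(\sqrt{2}s-B_s-A-\frac{3}{2\sqrt{2}}\log(t-s))^2}{2(t-s)}}\ind{B_r\leq K,\,r\leq s}\ind{B_s\in J^{(\pm)}}\right)ds,
\]
with $J^{(+)}=(-\epsilon\sqrt{t},\infty)$, $J^{(-)}=(-\infty,-\epsilon^{-1}\sqrt{t})$, where the Girsanov factor $e^{-\sqrt{2}B_s}$ has cancelled against the $e^{\sqrt{2}B_s}$ coming from the Bramson tail. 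Passing from the Brownian motion conditioned to stay below $K$ to the $3$-dimensional Bessel process $R$ starting at $K$ via \eqref{definition Bessel} converts the integrand into $\frac{K}{R_s}(1+\log t+R_s)e^{-R_s^2/2(t-s)+O(\log^2 t/t)}\ind{R_s\in I^{(\pm)}}$, with $I^{(+)}=[0,K+\epsilon\sqrt{t}]$ and $I^{(-)}=[\epsilon^{-1}\sqrt{t},\infty)$.

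For the $(+)$ case, one proceeds exactly as in the estimate of $\tilde{Z}_t$ in the previous proposition: by Bessel scaling $R_s\stackrel{(d)}{=}\sqrt{s}R_1$ started from $K/\sqrt{s}$, the density bound $\frac{y}{x}\frac{e^{-(y-x)^2/2}}{\sqrt{2\pi}}(1-e^{-2xy})\leq \frac{y^2}{\sqrt{2\pi}}e^{-(y-x)^2/2}\cdot 2x/x$ together with $s\geq\epsilon t$ confines $R_1$ to an interval of length $O(\sqrt{\epsilon})$, so the resulting integral is bounded by $C\sqrt{\epsilon}\,\log(t)/\sqrt{t}+C\epsilon$, which tends to $0$ as $t\to\infty$ then $\epsilon\to 0$. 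For the $(-)$ case, on the event $\{R_s\geq \epsilon^{-1}\sqrt{t}\}$ with $s\leq(1-\epsilon)t$ one has $R_s^2/(2(t-s))\geq 1/(2\epsilon^2(1-\epsilon))$, so the Gaussian factor contributes a uniform $\exp(-c/\epsilon^2)$ that dominates the polynomial prefactors (which by the computation in the previous proposition integrate to an $O(1)$ quantity), giving again $o_\epsilon(1)$.

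The main obstacle is that, in contrast to the previous proposition where the quadratic exponent in the Bramson tail played no role, here one must \emph{retain} the factor $e^{-R_s^2/2(t-s)}$ in order to handle the $(-)$ case, because the Girsanov weight $e^{-\sqrt{2}B_s}$ is exponentially large on $J^{(-)}$ and only cancels the linear Bramson factor. The bookkeeping is slightly heavier than before, but once this quadratic term is kept throughout, the two cases close by the same Bessel-scaling computations already performed in the proof of the previous proposition.
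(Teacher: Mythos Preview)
Your argument is correct and follows the same skeleton as the paper (first-moment bound, reduction to $T(u)\in[\epsilon t,(1-\epsilon)t]$ via the previous proposition, barrier $+K$, Girsanov, Bramson tail retaining the quadratic term, passage to the Bessel process, and scaling $R_s=\sqrt{s}R_1$). The only organisational difference is that the paper keeps the two sides of the forbidden window together: after scaling it writes $\ind{R_1\notin[\epsilon,1/\epsilon]}$, applies Fubini, and uses the change of variables $v=y\sqrt{u/(1-u)}$ to turn $\int_\epsilon^{1-\epsilon}\frac{e^{-uy^2/2(1-u)}}{\sqrt{u}(1-u)^{3/2}}du$ into $\leq\sqrt{2\pi}/y$, which yields the clean bounds $C\int_0^\epsilon e^{-y^2/2}dy$ and $C\int_{1/\epsilon}^\infty e^{-y^2/2}dy$ simultaneously. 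Your split into $(+)/(-)$ with the cruder ``small interval'' and ``uniform Gaussian factor $e^{-c/\epsilon^2}$'' bounds reaches the same conclusion with less bookkeeping on the $(-)$ side, at the cost of slightly less sharp constants.

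Two minor points: in your displayed upper bound the quadratic exponent should not contain $\sqrt{2}s$ (after the Girsanov shift the centring is $y\approx -B_s+O(\log t)$, which is exactly what you use afterwards when you write $e^{-R_s^2/2(t-s)+O(\log^2 t/t)}$); and in the $(-)$ case the ``polynomial prefactors'' integrate to $O_K(\epsilon^{-1/2})$ rather than $O(1)$, which is still harmless against $e^{-c/\epsilon^2}$.
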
	
\begin{proof}
	 We write	
	\begin{align}
	&\nonumber \mathbb{P}\left(\exists u \in \mathcal{N}^2_t:X_u(t)\geq m_t-A,  X_{u}(T(u))- \sqrt{2}T(u)\notin[-\frac{1}{\epsilon}\sqrt{t}, -\epsilon \sqrt{t}]\right)\\&\nonumber\leq \mathbb{P}\left(\exists u \in \mathcal{N}^2_t: X_u(t)\geq m_t-A, T(u)\notin [\epsilon t, (1-\epsilon)t] \right)\\&\label{propsitio6}+\mathbb{P}\left(\exists u \in \mathcal{N}^2_t: X_u(t)\geq m_t-A, T(u)\in [\epsilon t, (1-\epsilon)t], X_{u}(T(u))- \sqrt{2}T(u)\notin[-\frac{1}{\epsilon}\sqrt{t}, -\epsilon \sqrt{t}]\right).
	\end{align}
	Using Proposition \ref{prop1.1} it is enough to estimate \eqref{propsitio6}. We set, for $\epsilon,A,K \geq 0$ and $t \geq 0$:
	\[
	Y_t(A,\epsilon, K) = \sum_{u \in \mathcal{B}} \ind{ T(u)\in [\epsilon t, (1-\epsilon)t], X_{u}(T(u))- \sqrt{2}T(u)\notin[-\frac{1}{\epsilon}\sqrt{t}, -\epsilon \sqrt{t}]}\ind{X_u(r)\leq \sqrt{2}r+K, r\leq T(u)} \ind{M^{u}_t\geq m_t-A}.\] By the Markov inequality, it is enough to estimate $\E(Y_t(A,\epsilon,K))$. We have
	\begin{align*}
	&\E(Y_t(A,B,K))
	\\&= \E\left( \sum_{u \in \mathcal{B}}  \ind{ T(u)\notin [\epsilon t, (1-\epsilon)t], X_{u}(T(u))- \sqrt{2}T(u)\notin[-\frac{1}{\epsilon}\sqrt{t}, -\epsilon \sqrt{t}]} \ind{X_u(r)\leq \sqrt{2}r+K, r\leq T(u)}F\left(t-T(u),X_u(T(u))\right)  \right)\nonumber\\
	&= \alpha \int_{\epsilon t}^{(1-\epsilon)t} e^{s} \times\E\left( F\left(t-s,B_s\right) \ind{ B_s- \sqrt{2}s\notin[-\frac{1}{\epsilon}\sqrt{t}, -\epsilon \sqrt{t}], B_r\leq \sqrt{2}r+K, r\leq s} \right)\dd s\\& =\alpha \int_{\epsilon t}^{(1-\epsilon)t}\E\left(e^{-\sqrt{2}B_s} F\left(t-s,B_s+\sqrt{2}s\right) \ind{B_s\notin[-\frac{1}{\epsilon}\sqrt{t}, -\epsilon \sqrt{t}], B_r\leq K, r\leq s } \right)\dd s , \label{eqn:momentII}
	\end{align*}
	where we have set $F(r,x) = \P^{(2)}\left( x + M_r \geq m_t-A\right)$.	
	Using Proposition \ref{eq3} we have
	\begin{align*}
	&\E(Y_t(A,\epsilon,K))\leq \alpha C\int_{\epsilon t}^{(1-\epsilon)t} \frac{\sqrt{t+1}}{(t-s+1)^{\frac{3}{2}}}\E\left(e^{-\frac{B_s^2}{2(t-s)}}\left(c + \frac{\log(t)}{\sqrt{2}}+ (-B_s)_+ \right)\ind{B_s \notin[-\frac{1}{\epsilon}\sqrt{t}, -\epsilon \sqrt{t}],B_r \leq K, r\leq s }\right)\dd s.
	\end{align*} 
	By \eqref{definition Bessel}, we get	
	\begin{align*}
	&\int_{\epsilon t}^{(1-\epsilon)t} \frac{\sqrt{t+1}}{(t-s+1)^{\frac{3}{2}}}\E\left(e^{-\frac{B_s^2}{2(t-s)}}\left(c + \frac{\log(t)}{\sqrt{2}}+ (-B_s) \right)\ind{B_s\notin[-\frac{1}{\epsilon}\sqrt{t}, -\epsilon \sqrt{t}], B_r \leq K, r\leq s }\right)\dd s\\&\leq (c + \frac{\log(t)}{\sqrt{2}})\int_{\epsilon t}^{(1-\epsilon)t} \frac{\sqrt{t}}{(t-s)^{\frac{3}{2}}} \E_K\left(\frac{K}{R_s}\ind{R_s\notin[-\frac{1}{\epsilon}\sqrt{t}, -\epsilon \sqrt{t}]}\right)\dd s+ \int_{\epsilon t}^{(1-\epsilon)t} \frac{\sqrt{t}}{(t-s)^{\frac{3}{2}}} \E_K\left(\ind{R_s\notin[\epsilon \sqrt{t},\frac{1}{\epsilon}\sqrt{t}]}\right)\dd s.
	\end{align*}
	
	Using the change of variable $u=s/t$ and the Bessel scaling we have
	\begin{align}
	&\nonumber\int_{\epsilon t}^{(1-\epsilon)t} \frac{\sqrt{t}}{(t-s)^{\frac{3}{2}}} \E_K\left(\frac{K}{R_s}\ind{R_s\notin[-\frac{1}{\epsilon}\sqrt{t}, -\epsilon \sqrt{t}]}\right)\dd s+ \int_{\epsilon t}^{(1-\epsilon)t} \frac{\sqrt{t}}{(t-s)^{\frac{3}{2}}} \E_K\left(\ind{R_s\notin[\epsilon \sqrt{t},\frac{1}{\epsilon}\sqrt{t}]}\right)\dd s\\&\leq \frac{1}{\sqrt{t}}\int_{\epsilon }^{1-\epsilon}\frac{1}{\sqrt{u}(1-u)^{3/2}} \E_{K/\sqrt{tu}}\left(\frac{K}{R_1}e^{-\frac{R_1^2u}{2(1-u)}}\ind{R_1\notin[\epsilon ,\frac{1}{\epsilon}}\right)\dd u\\&\nonumber+2\int_{\epsilon }^{1-\epsilon} \frac{1}{(1-u)^{3/2}}\E_{K/\sqrt{tu}}\left(e^{-\frac{R_1^2u}{2(1-u)}}\ind{R_1\notin[\epsilon,\frac{1}{\epsilon}]}\right)\dd u.
	\end{align}
	We split the  expectation into two parts
	\begin{align*}
	   &\E_x\left(\frac{1}{R_1}e^{-\frac{sR_1^2}{2(t-s)}}\ind{R_1\notin[\epsilon,\frac{1}{\epsilon}]}\right)=\E_x\left(\frac{1}{R_1}e^{-\frac{sR_1^2}{2(t-s)}}\ind{R_1\leq \epsilon}\right)+\E_x\left(\frac{1}{R_1}e^{-\frac{sR_1^2}{2(t-s)}}\ind{R_1\geq \frac{1}{\epsilon}}\right)
	\end{align*}
	and we will deal with the two quantities in the same way.
	Using that $1-e^{-2xy}\leq 2xy$, for $x=\frac{K}{\sqrt{tu}}, y>0$ and $t$ large enough we have 
	
	\begin{align*}
	&\E_x\left(\frac{1}{R_1}e^{-\frac{sR_1^2}{2(t-s)}}\ind{R_1\leq \epsilon}\right)=\frac{1}{\sqrt{2\pi}}\int_{0}^{\epsilon}\frac{1}{x}e^{-\frac{uy^2}{2(1-u)}}e^{-(y-x)^2/2}(1-e^{-2xy})dy&\leq \frac{2}{\sqrt{2\pi}}\int_{0}^{\epsilon}ye^{-\frac{uy^2}{2(1-u)}}e^{-y^2/2}dy.
	\end{align*}
	Then with the change of variable $v=y\sqrt{\frac{u}{1-u}}$, we obtain
	\begin{align*}
	&\int_{\epsilon}^{1-\epsilon}\frac{1}{\sqrt{u}(1-u)^{3/2}}e^{-\frac{uy^2}{2(1-u)}}e^{yx}du\\&\leq \int_{\sqrt{\frac{y\epsilon}{1-\epsilon}}}^{y\sqrt{\frac{1-\epsilon}{\epsilon}}}e^{-v^2/2}(\frac{v^2}{v^2+y^2})^{-1/2} (\frac{y^2}{v^2+y^2})^{-3/2}\frac{2v y^2}{(v^2+y^2)^2}dv= 2\int_{y\sqrt{\frac{\epsilon}{1-\epsilon}}}^{y\sqrt{\frac{1-\epsilon}{\epsilon}}}\frac{e^{-v^2/2}}{y}dv\leq \sqrt{2\pi}/y.
	\end{align*}
	
	As a result,  using Fubini's theorem in \eqref{Fubini},  we obtain the following upper bound
	\begin{align*}
	&\int_{\epsilon}^{1-\epsilon}\frac{1}{\sqrt{u}(1-u)^{3/2}} \E_{K/\sqrt{tu}}\left(\frac{K}{R_1}e^{-\frac{R_1^2u}{2(1-u)}}\ind{R_1\notin[\epsilon,\frac{1}{\epsilon}]}\right)\dd u\leq C(\int_{0}^{\epsilon}e^{-y^2/2}dy+\int_{1/\epsilon}^{\infty}e^{-y^2/2}dy).
	\end{align*}
	We similarly bound 
	\begin{align*}
	&\int_{\epsilon}^{1-\epsilon} \frac{1}{(1-u)^{3/2}} \E_{K/\sqrt{tu}}\left(e^{-\frac{R_1^2u}{2(1-u)}}\ind{R_1\notin[\epsilon,\frac{1}{\epsilon}]}\right)\dd u\leq C(\int_{0}^{\epsilon}e^{-y^2/2}dy+\int_{1/\epsilon}^{\infty}e^{-y^2/2}dy).
	\end{align*}
	As a result we obtain 
	\begin{align}
	&\E(Y_t(A,\epsilon,K))\leq \alpha C\int_{\epsilon}^{1-\epsilon} \frac{\sqrt{t+1}}{(t-s+1)^{\frac{3}{2}}}\E\left(e^{-\frac{B_s^2}{2(t-s)}}\left(c + \frac{\log(t)}{\sqrt{2}}+ (-B_s)_+ \right)\ind{B_s \notin[-\frac{1}{\epsilon}\sqrt{t}, -\epsilon \sqrt{t}], B_r \leq K, r\leq s }\right)\dd s\\&\nonumber\leq C(\int_{0}^{\epsilon}e^{-y^2/2}dy+\int_{1/\epsilon}^{\infty}e^{-y^2/2}dy)\left(\frac{1}{\sqrt{t}}(c + \frac{\log(t)}{\sqrt{2}})+1\right).
	\end{align}
	Letting $t\to \infty$ then $\epsilon\to 0$, we conclude the proof.\end{proof}
We now turn to the proof of the main theorem.
\begin{proof}[Proof of Theorem \ref{main result}]
Let $\epsilon > 0$, we set 
\[
  \mathcal{E}^{\epsilon}_t := \sum_{u \in \mathcal{B}} \ind{T(u) \in[\epsilon t, (1-\epsilon)t]} \ind{X_{u}(T(u))- \sqrt{2}T(u)\in[-\frac{1}{\epsilon}\sqrt{t}, -\epsilon \sqrt{t}]} \sum_{\substack{u' \in \mathcal{N}^2_t\\u' \succcurlyeq u}} \delta_{X_{u'}(t) - m_t}.
\]
Let $\phi \in \mathcal{T}$, we assume the support of $\phi$ is contained in $[-A,\infty)$ for some $A > 0$. We set
\[
\calG_t(\epsilon) = \left\{ \exists u \in \mathcal{N}^2_t : X_u(t) \geq m_t - A, T(u) \in [\epsilon t,(1-\epsilon)t], X_{u}(T(u))- \sqrt{2}T(u)\in[-\frac{1}{\epsilon}\sqrt{t}, -\epsilon \sqrt{t}]\right\}.
\]
By Propositions \ref{prop1.1} and \ref{proposition6} we have $\limsup_{t \to \infty} \P(\calG_t(\epsilon)^c) \to 0$ as $\epsilon \to 0$ , furthermore we have
\[
\left| \E\left(  e^{-\crochet{\mathcal{E}_t,\phi}} \right) - \E\left( e^{-\crochet{\mathcal{E}^{\epsilon}_t,\phi}} 
\right) \right|  \leq \P(\calG_t(\epsilon)^c),\]
 therefore it is enough to compute the asymptotic behaviour of $\E\left( e^{-\crochet{\mathcal{E}^{\epsilon}_t,\phi}}\right)$. 


Using \eqref{eq1}, for all $\phi \in \mathcal{T}$, we have
\[
  \E\left( e^{ -\crochet{\mathcal{E}^{\epsilon}_t,\phi}} \right) = \E\left( \exp\left( - \alpha \int_{\epsilon t }^{(1-\epsilon)t} \sum_{u \in \mathcal{N}_s} \ind{|X_u(s)-s|\in{[-\frac{1}{\epsilon}\sqrt{t}, -\epsilon\sqrt{t}] }}F_{\phi}(t-s,X_u(s)-\sqrt{2}s) \dd s \right) \right),
\]
with $F_{\phi}(r,x) = 1 - \E^{(2)}\left( e^{- \sum_{u \in \mathcal{N}^2_r} \phi(X_u(r)-m_r-x)} \right)$. Additionally, by Corollary \ref{corollary extremal process}, we have
\[
  F_{\phi}(r, x) = \gamma(\phi)\frac{\sqrt{t}(-x)e^{\sqrt{2}x}}{r^{\frac{3}{2}}}e^{-x^2/2r}(1+o(1))
\]
as $r \to \infty$, uniformly in $t-r\in[\epsilon t, (1-\epsilon)t]$ and $x\in[-\frac{1}{\epsilon}\sqrt{t}, -\epsilon \sqrt{t}]$.

As a result, recalling that $\gamma(\phi) = \alpha C^{*}\sqrt{2} \int e^{-\sqrt{2} z} \left(1 - \E(e^{-\crochet{\mathcal{D},\phi(.+z)}})\right) \dd z$ using the notation of Corollary  \ref{corollary extremal process}, we have 
\begin{align}
\label{eqtm1}
  &\limsup_{t\to \infty}\E\left( e^{ -\crochet{\mathcal{E}^{\epsilon}_t,\phi}} \right)
   \leq \limsup_{t \to \infty}\E\left( \exp\left( - \gamma(\phi)  \int_{\epsilon t}^{(1-\epsilon)t}\frac{\sqrt{t}}{(t-s)^{3/2}} \tilde{Z}^{\epsilon}_s\dd s \right) \right)
  \end{align}
  and 
  \begin{align}
  \label{eqtm2}
  &\liminf_{t\to \infty}\E\left( e^{ -\crochet{\mathcal{E}^{\epsilon}_t,\phi}} \right)
   \geq \liminf_{t \to \infty}\E\left( \exp\left( -\gamma(\phi)  \int_{\epsilon t}^{(1-\epsilon)t}\frac{\sqrt{t}}{(t-s)^{3/2}} \tilde{Z}^{\epsilon}_s\dd s \right) \right)
  \end{align}
  where \[\tilde{Z}^{\epsilon}_s=\sum_{u \in \mathcal{N}_s} (\sqrt{2}s-X_u(s)) e^{\sqrt{2}(X_u(s) - \sqrt{2}s)}e^{-\frac{(\sqrt{2}s-X_u(s))^2}{2(t-s)}}\ind{|X_u(s)-\sqrt{2}s|\notin[-\frac{1}{\epsilon}\sqrt{t}, -\epsilon \sqrt{t}]}.\]
We set $\lambda t=s$, then we have
\begin{align*}
&\mathbb{E}\left(\exp\left(- \gamma(\phi) \int_{\epsilon t}^{(1-\epsilon)t}\frac{\sqrt{t}\tilde{Z}_s^{\epsilon}}{(t-s)^{\frac{3}{2}}} ds \right)\right)=\mathbb{E}\left(\exp\left(-\gamma(\phi)\int_{\epsilon}^{1-\epsilon}\frac{\tilde{Z}^{\epsilon}_{\lambda t}}{(1-\lambda)^{\frac{3}{2}}} d\lambda\right) \right).
\end{align*}

We now observe that by Theorem $1.2$ in \cite{zbMATH06524139}, for all $\lambda\in[0,1]$ we have
$$\lim_{t\to \infty} \tilde{Z}^{\epsilon}_{\lambda t}= \E(h_{\lambda,\epsilon}(R_1)) Z_{\infty}$$
where $x\mapsto h_{\lambda,\epsilon}(x)= e^{-\frac{\lambda}{2(1-\lambda)}x^2}\ind{\epsilon/\sqrt{\lambda}<x\leq 1/(\epsilon\sqrt{\lambda})}$, $(R_s)_{s\geq 0}$ is a $3$-dimensional Bessel process and $Z_{\infty}$ is the limit of the critical derivative martingale. 
As a result, writing \[c(\epsilon)=\int_{\epsilon}^{1-\epsilon}\frac{\E(h_{\lambda,\epsilon}(R_1))}{(1-\lambda)^{3/2}}d\lambda,\]
by dominated convergence theorem, \eqref{eqtm1} and \eqref{eqtm2} yield 
\[\lim_{t\to \infty }E\left( e^{ -\crochet{\mathcal{E}^{\epsilon}_t,\phi}} \right)=\mathbb{E}\left(\exp(-c(\epsilon)\gamma(\phi) Z_{\infty}) \right),\]

 On the other hand, recall that the density of $R_1$ under $\mathbb{P}_0$ is $$z\mapsto \sqrt{\frac{2}{\pi}}z^2e^{-z^2/2}\ind{z>0}.$$ Hence, using computations with respect to the density of $R_1$ and  the monotone convergence theorem we obtain $\lim_{\epsilon \to 0}\E(h_{\lambda,\epsilon}(R_1))=(1-\lambda)^{3/2}$, leading, using again dominated convergence theorem \[\lim_{\epsilon \to 0}c(\epsilon)=1.\] Therefore, letting $t\to \infty$  we deduce \[\lim_{t\to \infty }E\left( e^{ -\crochet{\mathcal{E}^{\epsilon}_t,\phi}} \right)=\mathbb{E}\left(\exp(-\gamma(\phi) Z_{\infty}) \right)=\mathbb{E}\left(\exp(-\alpha C^{*}\sqrt{2} Z_{\infty} \int e^{-\sqrt{2} z} \left(1 - \E(e^{-\crochet{\mathcal{D},\phi(.+z)}})\right) \dd z) \right),\]  which
is the Laplace transform of a decorated PPP with intensity $\sqrt{2}\alpha
C^* Z_\infty e^{-\sqrt{2}z} dz.$ As a result using \cite[Lemma 4.1]{berestycki2018simple}, we complete the proof of Theorem \ref{main result}.

\end{proof}

\bibliographystyle{plain}.
\bibliography{BIBLIO}

\end{document}